\numberwithin{equation}{section}
\newtheorem{theorem}{Theorem}[section]
\theoremstyle{remark}
\newtheorem{remark}{Remark}[section]
\theoremstyle{definition}
\begin{document}
\title{Recursion Formulas for HOMFLY and Kauffman Invariants}
\author{Qingtao Chen}
\address{Q. Chen, Mathematics Section, International Center for Theoretical
Physics, Strada Costiera, 11, Trieste, I-34151, Italy \\
(qchen1@ictp.it)}
\author{Nicolai Reshetikhin}
\address{N. Reshetikhin, Department of mathematics, University of
California, Berkeley, CA 94720-3840, USA \\
(reshetik@math.berkeley.edu)}

\begin{abstract}
In this note we describe the recursion relations between two parameter
HOMLFY and Kauffman polynomials of framed links These relation correspond to
embeddings of quantized universal enveloping algebras. The relation
corresponding to embeddings $g_{n}\supset g_{k}\times sl_{n-k}$ where $g_{n}$
is either $so_{2n+1}$, $so_{2n}$ or $sp_{2n}$ is new.
\end{abstract}

\maketitle

\section*{Introduction}

\label{sec1}

The relation between knot polynomials and solutions to the Yang-Baxter
equation corresponding to quantized universal enveloping algebras was first
established in \cite{J} for HOMFLY and in \cite{T} for Kauffman polynomials.
In these references HOMFLY and Kauffman polynomials for special values of
the second parameter were written as a local state sum on a diagram of a
link. This result was extended in \cite{R} and \cite{RT} into the
construction of polynomial invariants of tangles graphs based on the
representation theory of quantized universal enveloping algebras.

Embeddings of simple Lie algebras which embed the Dynkin diagram of a Lie
algebra into the Dynkin diagram of the other algebra induce embeddings of
corresponding quantized universal enveloping algebras. For classical Lie
algebras we have:
\begin{equation}  \label{em-1}
U_q(sl_{n-k}) \otimes U_q(sl_{k})\subset U_q(sl_{n})
\end{equation}
\begin{equation}  \label{em-2}
U_q(sl_k)\otimes U_q(so_{2n-2k})\subset U_q(so_{2n}), \ \ U_q(sl_k)\otimes
U_q(sp_{2n-2k})\subset U_q(sp_{2n})
\end{equation}
\begin{equation}  \label{em-3}
U_q(sl_k)\otimes U_q(so_{2n+1-2k})\subset U_q(so_{2n+1})
\end{equation}
for $k=1,\dots, n-1$, and
\begin{equation}  \label{em-4}
U_q(sl_n)\subset U_q(so_{2n}), \ \ U_q(sl_n)\subset U_q(sp_{2n}), \ \
U_q(sl_n)\subset U_q(so_{2n+1})
\end{equation}

For a classical Lie algebra the restriction of the defining fundamental
representation (vector representations) of quantized universal enveloping
algebras to a diagrammatically embedded subalgebra gives certain identities
between HOMFLY and Kauffman polynomials when the second variable in these
polynomials is specialized to the appropriate power of $q$. These relations
were described in \cite{R} for embeddings (\ref{em-1}) and (\ref{em-4}) and
also for $sl_{2}\subset g_{2}$.

These relations were extended to non-specialized HOMFLY polynomial (when the
second variable is not specialized to a power of $q$) by F. Jaeger \cite{Ja}%
. They were generalized to invariants of graphs colored by special
representations by Kauffman and Vogel \cite{KV} and by Hao Wu \cite{Wu1} and
\cite{Wu2}.

The goal of this note is to give a complete list of such identities for two
variable HOMFLY and Kauffman polynomials. One should expect that each of the
functors constructed in this note can be categorified, see \cite{Wu1} and
\cite{Wu2} for some results in this direction.

In the first section we construct the functor $\phi _{tq;t,q}$ from the
category of HOMFLY skein modules $\mathcal{H}{}_{q;tq}$ to the category ${}%
\mathcal{H}_{q;t,q}$ which can be regarded as a shuffle tensor product of
additive categories ${}\mathcal{H}_{q;t}$ and ${}\mathcal{H}_{q;q}$. This
functor gives recursive relation for HOMFLY polynomials of framed links
expressing $H_{q,tq}(\mathcal{L})$ in terms of a linear combination of $%
H_{q,t}(\mathcal{L}^{\prime })$ for links $\mathcal{L}^{\prime }$ which are
obtained from $\mathcal{L}$ by a simple combinatorial procedure. This
recursion formula corresponds to (\ref{em-1}) with $k=1$. In the second
section we give the recursion formula corresponding to (\ref{em-1}) for all $%
k$ for HOMFLY polynomials as a functor from one skein category to another.
Similarly, the recursion relations from section 3 correspond to embeddings (%
\ref{em-4}). All this is an overview of \cite{Ja} and of \cite{R}. In
section 4 we give the recursion formula for Kauffman polynomials
corresponding to the embeddings (\ref{em-2}) and (\ref{em-3}) which is the
main result of this paper.

\section{Recursion corresponding to $sl_{n+1}\supset sl_{n}$}

\label{sec2}

\subsubsection{The functor $\protect\phi _{q;t,q}$}

The definition of HOMFLY polynomials via skein relations naturally extends
to invariants of tangles with values in skein modules \cite{T}. Let us
recall this construction.

Choose a line $L\subset \mathbf{R}^{2}$. A tangle $\mathcal{T}$ in $I\times
\mathbf{R}^{2}$ with end points in $L\subset (\{0\}\times \mathbf{R}%
^{2})\cup (\{1\}\times \mathbf{R}^{2})$ is an equivalence class of an
embedding $(S^{1})^{l}\times I^{k}\subset I\times {}{}{}{}\mathbf{R}^{2}$
such that end points belong to $(\{0\}\times {}{}{}{}\mathbf{R}^{2})\cup
(\{1\}\times {}{}{}{}\mathbf{R}^{2})$. Such embedding before taking the
equivalence class are called \textit{geometric tangles}. The equivalence is
taken with respect to homeomorphisms trivial at the boundary. A framing can
be thought as a homeomorphism class of a continuous section of the normal
bundle to $\mathcal{T}$. The framing is called blackboard if the
corresponding framed tangle has a representative where the framing at the
endpoints of $\mathcal{T}$ lies in the intersection of $L^{\perp }$ and $%
(0,0,1)^{\perp }$.

A \textit{diagram} of a framed tangle is the projection of a geometric
tangle to $L\times [0,1]$, assuming that the tangle has a blackboard framing.

\textit{Objects} of the category of tangles $\underline{Tan}$ are sequences $%
(\epsilon_1,\dots, \epsilon_n)$ with $\epsilon=\pm 1$.

\textit{Morphisms} between $\{\epsilon \}$ and $\{\sigma \}$ are oriented
framed tangles with the orientation which agrees with $\{\epsilon \}$ and $%
\{\sigma \}$ at the end points and such that the framing at the end points
is orthogonal to $L\subset {}{}{}\mathbf{R}^{2}$ and and point to positive
(with respect to the standard orientation of $L$ direction). Such framing is
called blackboard framing.

Note that morphisms in this category can be naturally identified with framed
Redemeister classes of diagrams of tangles.

The composition of morphisms is the gluing and then taking the homeomorphism
class of the result of the gluing. The detail can be found in \cite{T}.

For a ring $A$, define the additive category $\underline{Tan}_{A}$ as the
category with objects being direct sums of objects of $\underline{Tan}$ but
with morphisms being $A-$linear combination of tangles.

HOMLFY invariants are morphisms in quotient category of the $\underline{Tan}%
_{{}{}\mathbf{C}[t^{\pm 1},q^{\pm 1}]}$ subject to the following relations:

\begin{equation}
\raisebox{-0.2006in}
{\includegraphics[height=0.4687in,width=0.5197in]
{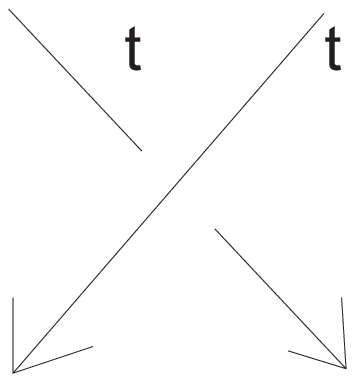}}\text{ }-\text{ }%
\raisebox{-0.2006in}
{\includegraphics[height=0.4687in,width=0.5197in]
{positivecross-tt.eps}}=(q-q^{-1})\text{ }%
\raisebox{-0.1609in}
{\includegraphics[height=0.5076in,width=0.4947in]
{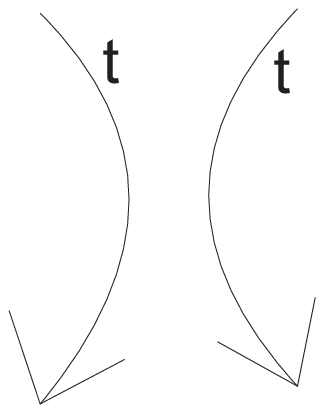}}  \label{HOMFLY}
\end{equation}%
and

\begin{equation}
\raisebox{-0.2006in}
{\includegraphics[height=0.6348in,width=0.4142in]
{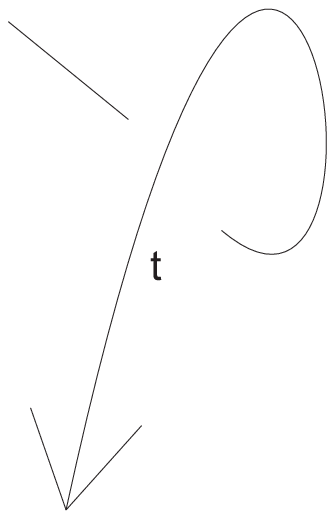}}\text{ }=t\text{ }%
\raisebox{-0.2214in}
{\includegraphics[height=0.6244in,width=0.2084in]
{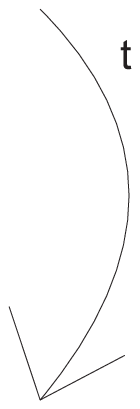}}\ ,\text{ }%
\raisebox{-0.1903in}
{\includegraphics[height=0.6547in,width=0.4047in]
{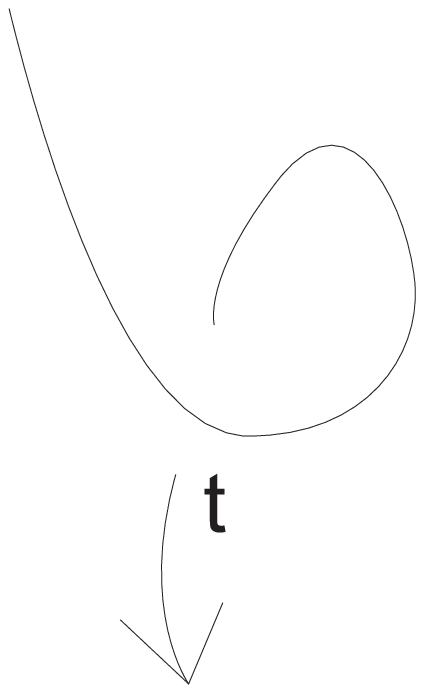}}\text{ }=t^{-1}\text{ }%
\raisebox{-0.2006in}
{\includegraphics[height=0.6244in,width=0.2084in]
{resolvekink-t.eps}}
\end{equation}

Objects of the resulting quotient category $\mathcal{H}{}_{q;t}$ are the
same as objects in the category of oriented framed tangles, i.e. sequences $%
(\epsilon _{1},\ \dots ,\epsilon _{n})$ where $\epsilon _{i}=\pm 1$.
Morphisms in this category are skein classes of linear combinations of
tangles. To indicate that the parameter involved in twist relations is $t$
we will write $t$ as a color of each connected component of of a tangle and
will write $((\epsilon _{1},t),\ \dots ,(\epsilon _{n},t))$ for objects.

\begin{remark}
The morphism between two identity objects (between two empty sequences)
given by an unknot with trivial framing is determined by the skein relations
and is equal to $(t-t^{-1})/(q-q^{-1})$.
\end{remark}

\begin{remark}
When $t=q^{n}$ the category $\mathcal{H}{}_{q;t}$ has the quotient category
which is naturally equivalent to the category of modules over $U_{q}(sl_{n})$%
.
\end{remark}

Define the category $\mathcal{H}{}_{q;s_{1},\dots ,s_{k}}$ as an additive
braided monoidal category over ${}{}{}\mathbf{C}[q^{\pm 1},s_{1}^{\pm
1},\dots ,s_{k}^{\pm 1}]$ with objects $((\epsilon _{1},s_{i_{1}}),\dots
,(\epsilon _{n},s_{i_{n}}))$. Morphisms between two such objects $A$ and $B$
are quotients of linear combination of tangles with blackboard framing whose
orientation agrees with the signs of $A$ and $B$, whose colors agree with
colors $\{s_{i_{a}}\}$ of $A$ and $B$. The quotient is taken with respect to
the HOMFLY relations which are (\ref{HOMFLY}) when both components are
colored by the same variable $s_{i}$; when $i\neq j$, we impose

\begin{equation}
\raisebox{-0.2006in}
{\includegraphics[height=0.4687in, width=0.5197in]
{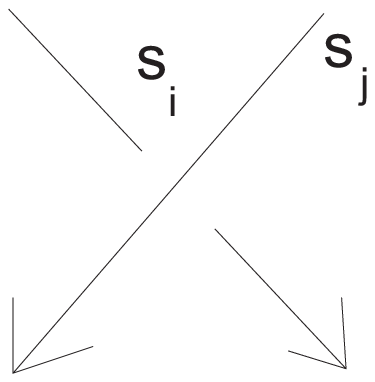}}\text{ }=\text{ }%
\raisebox{-0.2006in}
{\includegraphics[height=0.4687in, width=0.5197in]
{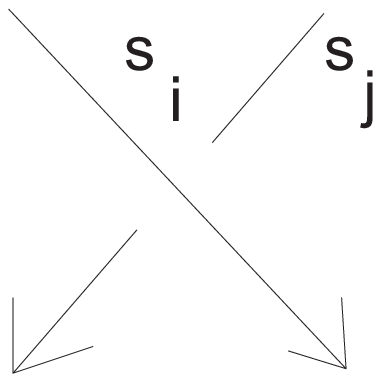}}\text{ }=\text{ }%
\raisebox{-0.2006in}
{\includegraphics[height=0.4687in, width=0.5197in]
{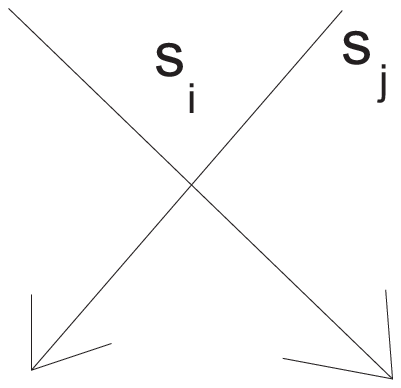}}
\end{equation}

Define the mapping $\phi _{q;t,q}:{}\mathcal{H}_{q;tq}\rightarrow \mathcal{H}%
{}_{q;t,q}$ which acts on objects as $\phi _{q;t,q}((\epsilon _{1},t),\dots
(\epsilon _{n},t))=\oplus ((\epsilon _{1},u_{1}),\dots ,(\epsilon
_{n},u_{n}))$. Here the sum is taken over all values of $u_{i}$ which are
either $q$ or $t$ an the summation is over all such possibilities. On
elementary morphisms we define $\phi _{q;t,q}$ as follows.

\begin{equation}
\phi _{q;t,q}\left(
\raisebox{-0.1609in}
{\includegraphics[height=0.4687in,width=0.5197in]
{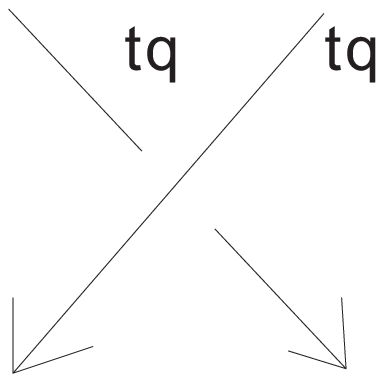}}\right) =\left(
\begin{array}{cccc}
\raisebox{-0.2006in}
{\includegraphics[height=0.4687in,width=0.5197in]
{positivecross-tt.eps}} & 0 & 0 & 0 \\
0 & (q-q^{-1})%
\raisebox{-0.2006in}
{\includegraphics[height=0.5076in, width=0.4947in]
{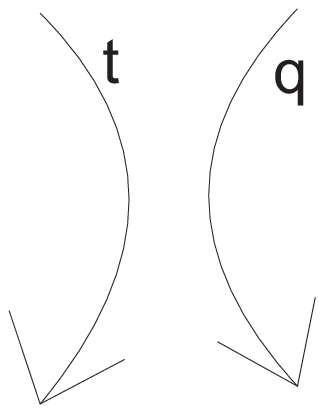}} &
\raisebox{-0.2006in}
{\includegraphics[height=0.4687in, width=0.5197in]
{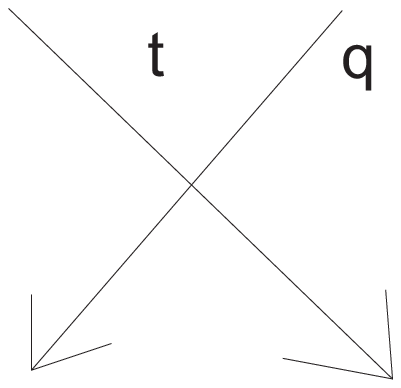}} & 0 \\
&  &  &  \\
0 &
\raisebox{-0.2006in}
{\includegraphics[height=0.4687in, width=0.5197in]
{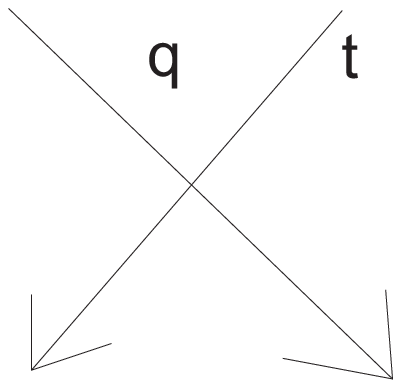}} & 0 & 0 \\
0 & 0 & 0 &
\raisebox{-0.2006in}
{\includegraphics[height=0.4687in,width=0.5197in]
{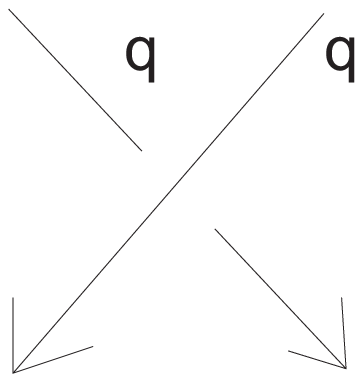}}%
\end{array}%
\right)
\end{equation}

\begin{equation}
\phi _{q;t,q}\left(
\raisebox{-0.1609in}
{\includegraphics[height=0.4679in,width=0.5197in]
{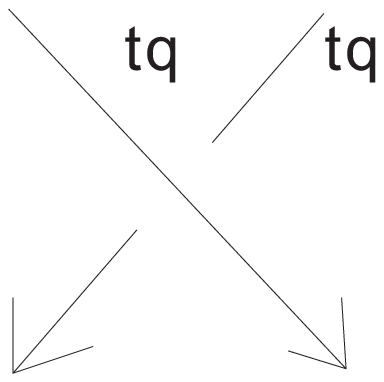}}\right) =\left(
\begin{array}{cccc}
\raisebox{-0.2006in}
{\includegraphics[height=0.4687in,width=0.5197in]
{positivecross-tt.eps}} & 0 & 0 & 0 \\
0 & 0 &
\raisebox{-0.2006in}
{\includegraphics[height=0.4687in, width=0.5197in]
{cross-tq.eps}} & 0 \\
&  &  &  \\
0 &
\raisebox{-0.2006in}
{\includegraphics[height=0.4687in, width=0.5197in]
{cross-qt.eps}} & -(q-q^{-1})%
\raisebox{-0.2006in}
{\includegraphics[height=0.5076in,width=0.4947in]
{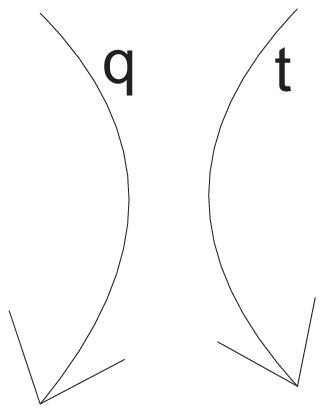}} & 0 \\
0 & 0 & 0 &
\raisebox{-0.2006in}
{\includegraphics[height=0.4679in,width=0.5197in]
{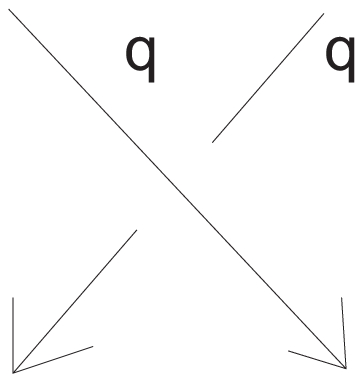}}%
\end{array}%
\right)
\end{equation}%
\begin{equation}
\phi _{q;t,q}\left(
\raisebox{-0.1609in}
{\includegraphics[height=0.4756in,width=0.2811in]
{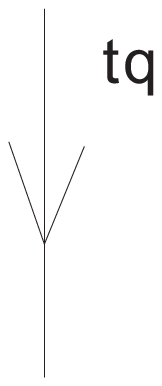}}\right) =\left(
\begin{array}{cc}
\raisebox{-0.2006in}
{\includegraphics[height=0.4765in,width=0.2776in]
{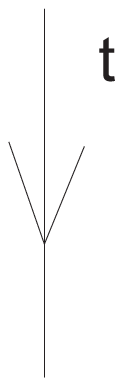}} & 0 \\
0 &
\raisebox{-0.2006in}
{\includegraphics[height=0.4765in,width=0.2776in]
{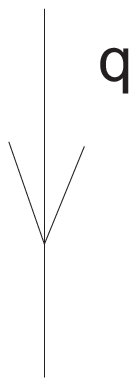}}%
\end{array}%
\right)
\end{equation}%
\begin{equation}
\phi _{q;t,q}\left(
\raisebox{-0.0502in}
{\includegraphics[height=0.1643in,width=0.3624in]
{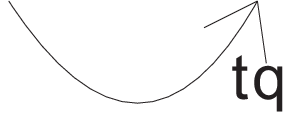}}\right) =\left(
\begin{array}{c}
{\includegraphics[height=0.1513in, width=0.3442in]
{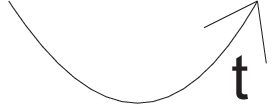}} \\
0 \\
0 \\
{\includegraphics[height=0.1643in, width=0.3442in]
{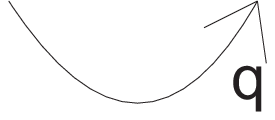}}%
\end{array}%
\right) \text{, }\phi _{q;t,q}\left( {%
\includegraphics[height=0.1643in,width=0.3624in ]
{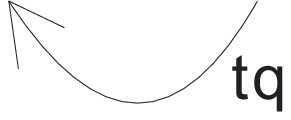}}\right) =\left(
\begin{array}{c}
q\text{ }%
\raisebox{-0.0502in}
{\includegraphics[height=0.1513in,width=0.3312in]
{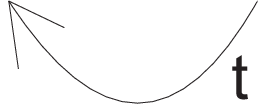}} \\
0 \\
0 \\
t^{-1}\text{ }%
\raisebox{-0.0502in}
{\includegraphics[height=0.1643in,width=0.3355in]
{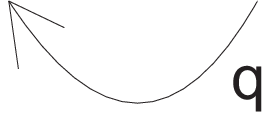}}%
\end{array}%
\right)
\end{equation}%
\begin{equation}
\phi _{q;t,q}\left(
\raisebox{-0.0502in}
{\includegraphics[height=0.1574in,width=0.2646in]
{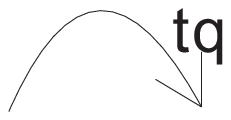}}\right) =\left(
\begin{array}{cccc}
\raisebox{-0.0502in}
{\includegraphics[height=0.1574in,width=0.339in]
{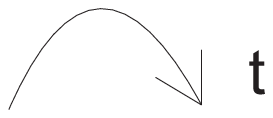}} & 0 & 0 &
\raisebox{-0.0502in}
{\includegraphics[height=0.1574in,width=0.2646in]
{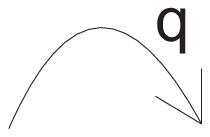}}%
\end{array}%
\right) \text{, }\phi _{q;t,q}\left(
\raisebox{-0.0502in}
{\includegraphics[height=0.1574in,width=0.3874in]
{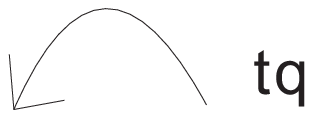}}\right) =\left(
\begin{array}{cccc}
q^{-1}\text{ }%
\raisebox{-0.0502in}
{\includegraphics[height=0.1565in,width=0.2984in]
{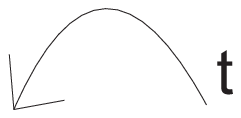}} & 0 & 0 & t\text{ }%
\raisebox{-0.0502in}
{\includegraphics[height=0.1574in,width=0.3113in]
{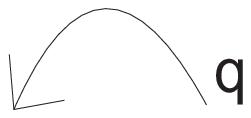}}%
\end{array}%
\right)
\end{equation}%
\begin{equation}
\phi _{q;t,q}\left(
\raisebox{-0.1609in}
{\includegraphics[height=0.5076in,width=0.4947in]
{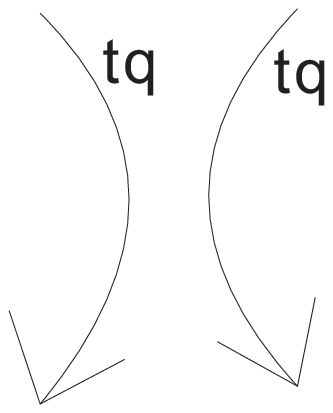}}\right) =\left(
\begin{array}{cccc}
\raisebox{-0.1609in}
{\includegraphics[height=0.5076in,width=0.4947in]
{resolvecorss-tt.eps}} & 0 & 0 & 0 \\
0 &
\raisebox{-0.1609in}
{\includegraphics[height=0.5076in,width=0.4947in]
{resolvecorss-tq.eps}} & 0 & 0 \\
0 & 0 &
\raisebox{-0.1609in}
{\includegraphics[height=0.5076in,width=0.4947in]
{resolvecorss-qt.eps}} & 0 \\
0 & 0 & 0 &
\raisebox{-0.1609in}
{\includegraphics[height=0.5076in, width=0.4947in]
{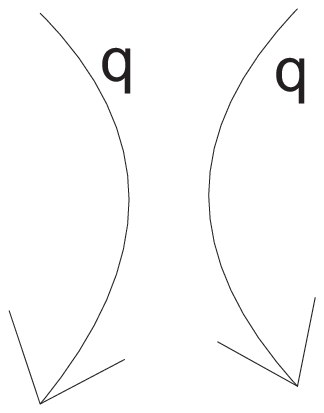}}%
\end{array}%
\right)
\end{equation}

Numerical factors are always to the left of the diagram and in a bigger
script.

The matrices on the right side indicate the morphisms between various
summands in the image of objects.

\begin{theorem}
The mapping $\phi _{q;t,q}$ extends uniquely to a covariant functor of
braided monoidal categories ${}\mathcal{H}_{q;tq}\rightarrow \mathcal{H}%
{}_{q;t,q}$.
\end{theorem}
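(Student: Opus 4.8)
The plan is to treat $\mathcal{H}_{q;tq}$ through its standard presentation as a rigid braided monoidal category of framed oriented tangles. Recall (cf. \cite{T,RT}) that its morphisms are generated, under composition and tensor product, by the two crossings, the cup and cap morphisms for each orientation, and the identity strand, subject to the oriented Reidemeister moves RII and RIII, the rigidity (zig-zag) identities relating cups and caps, the framed Reidemeister I relation expressing a curl as multiplication by $tq$, and the HOMFLY skein relation (\ref{HOMFLY}). Since $\phi_{q;t,q}$ has already been prescribed on the generating object and on every generating morphism, and since a monoidal functor is forced to commute with composition and tensor product, the value of any extension on an arbitrary morphism is completely determined; this gives uniqueness immediately. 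Hence the entire content of the theorem is \emph{existence}, i.e. that the prescribed block-matrix images satisfy each defining relation of $\mathcal{H}_{q;tq}$ inside $\mathcal{H}_{q;t,q}$.

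First I would note that $\phi_{q;t,q}$ is monoidal by construction: the image of a tensor product of objects is the direct sum over all colourings of the strands by $q$ or $t$, so distributing the direct sum over $\otimes$ shows $\phi_{q;t,q}(A\otimes B)=\phi_{q;t,q}(A)\otimes\phi_{q;t,q}(B)$, and the generator matrices are laid out compatibly with this decomposition. Each image matrix is block-diagonal with respect to the multiset of colours carried by the strands, so every relation splits into a pure-colour part and a mixed part. The HOMFLY skein relation is then immediate: subtracting the negative-crossing matrix from the positive-crossing matrix cancels the common off-diagonal colour-swap entries and leaves exactly $(q-q^{-1})$ times the diagonal resolution matrix, entry by entry. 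The zig-zag rigidity identities and the twist relation are checked by composing the cup and cap matrices, respectively by assembling a curl from a crossing together with a cup and a cap; here the scalar prefactors $q^{\pm1},t^{\pm1}$ attached to the cup/cap images are precisely what is required so that on the colour-$t$ summand the twist $t$ is corrected to $tq$ and on the colour-$q$ summand the twist $q$ is likewise corrected to $tq$, reproducing the framing parameter of the source.

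The pure-colour blocks of RII and RIII hold automatically, since on the all-$t$ and all-$q$ summands $\phi_{q;t,q}$ restricts to the ordinary monochromatic crossing $X_{tt}$ or $X_{qq}$, which already satisfies RII and the braid relation in $\mathcal{H}_{q;t,q}$. The substantive work, and the step I expect to be the main obstacle, is the mixed-colour part of the Yang--Baxter relation RIII (and, to a lesser extent, of RII). For three strands carrying two distinct colours one must show that the composite of the block matrices built from $X_{tt}$, $X_{qq}$, the colour-swap morphism $C$, and the resolution $R$ satisfies the braid identity. I would handle this by first recording the auxiliary relations these morphisms obey in $\mathcal{H}_{q;t,q}$ --- that $C$ is a natural involutive swap of differently-coloured strands, so $C_{tq}C_{qt}=\id$ and $C$ slides past $X_{tt},X_{qq}$ and $R$, and that $R$ is the oriented smoothing --- and then expanding both sides of RIII block by block, whereupon each mixed block reduces, after using these relations, to the monochromatic skein and braid relations already in hand; RII is verified analogously by checking $\phi_{q;t,q}(X^{+})\,\phi_{q;t,q}(X^{-})=\id$ on each block. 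Once all the defining relations are confirmed the extension exists, and since the braided monoidal structure of $\mathcal{H}_{q;tq}$ is presented exactly by the tangle relations just verified, the extension is a functor of braided monoidal categories, as claimed.
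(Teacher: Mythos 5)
Your proposal is correct and follows essentially the same route as the paper: uniqueness is automatic because the elementary diagrams generate all morphisms, and existence is established by verifying the defining relations on the images of the generators --- the skein relation by block-matrix subtraction and the twist relation by composing the crossing with the cup and cap, using the normalization of the unknot colored $q$. You are in fact somewhat more explicit than the paper, which writes out only the positive twist and the skein checks and leaves the Reidemeister II/III, zig-zag, and mixed-colour verifications implicit.
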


\begin{proof}
Composing elementary diagrams we extend the mapping $\phi _{q;t,q}$ to all
objects. It is clear that if it is consistent with HOMFLY relations, then
such extension exists and unique. For the positive twist relation the
consistency is a simple computation:
\begin{align}
\phi _{q;t,q}\left(
\raisebox{-0.2006in}
{\includegraphics[height=0.6348in,width=0.4142in]
{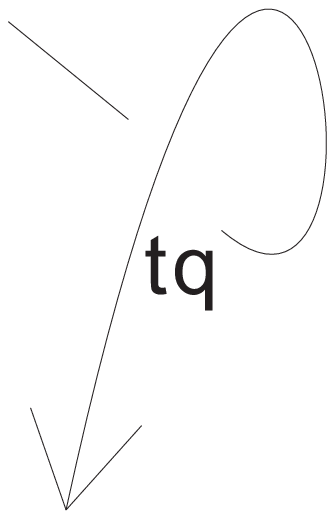}}\right) & =\phi _{q;t,q}\left(
\raisebox{-0.1609in}
{\includegraphics[height=0.4679in,width=0.5197in]
{positivecross-tqtq.eps}}_{%
\raisebox{-0.1003in}
{\includegraphics[height=0.1652in,width=0.3615in]
{rightlower-tq.eps}}}^{{%
\includegraphics[height=0.1565in,width=0.3874in]
{lefttop-tq.eps}}}\right) \\
& =\left(
\begin{array}{cc}
q^{-1}\text{ }%
\raisebox{-0.1609in}
{\includegraphics[height=0.4679in,width=0.5197in]
{positivecross-tt.eps}}_{%
\raisebox{-0.1003in}
{\includegraphics[height=0.1513in,width=0.3442in]
{rightlower-t.eps}}}^{%
\raisebox{0in}
{\includegraphics[height=0.1565in,width=0.2984in]
{lefttop-t.eps}}}+t\text{ }%
\raisebox{-0.1609in}
{\includegraphics[height=0.5076in,width=0.4947in]
{resolvecorss-tq.eps}}_{%
\raisebox{-0.1003in}
{\includegraphics[height=0.1643in,width=0.3442in]
{rightlower-q.eps}}}^{%
\raisebox{0in}
{\includegraphics[height=0.1574in,width=0.3113in]
{lefttop-q.eps}}}(q-q^{-1}) & 0 \\
0 & t\text{ }%
\raisebox{-0.1609in}
{\includegraphics[height=0.4679in,width=0.5197in]
{positivecross-qq.eps}}_{%
\raisebox{-0.1003in}
{\includegraphics[height=0.1643in,width=0.3442in]
{rightlower-q.eps}}}^{%
\raisebox{0in}
{\includegraphics[height=0.1574in,width=0.3113in]
{lefttop-q.eps}}}%
\end{array}%
\right) \\
& =\left(
\begin{array}{cc}
q^{-1}\text{ }%
\raisebox{-0.2006in}
{\includegraphics[height=0.6348in,width=0.4142in]
{positivekink-t.eps}}+t(q-q^{-1})%
\raisebox{-0.1609in}
{\includegraphics[height=0.5007in,width=0.6737in]
{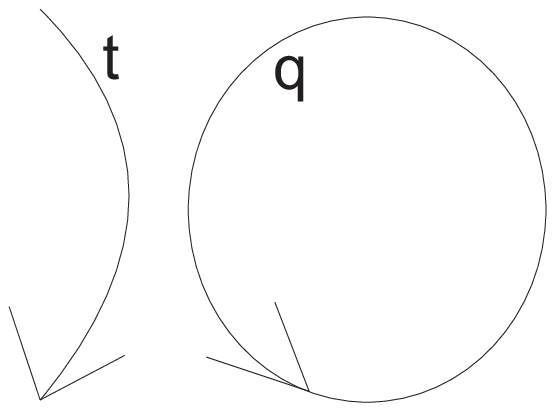}} & 0 \\
0 & t\text{ }%
\raisebox{-0.2006in}
{\includegraphics[height=0.6348in,width=0.4142in]
{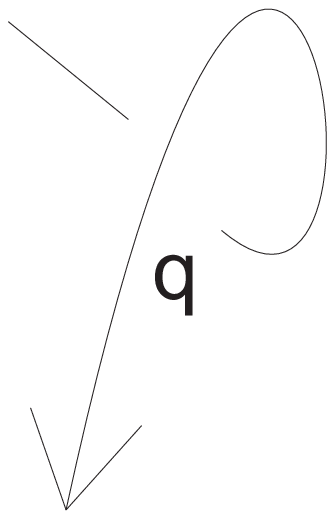}}%
\end{array}%
\right) \\
& =\left(
\begin{array}{cc}
q^{-1}t\text{ }%
\raisebox{-0.2214in}
{\includegraphics[height=0.6244in,width=0.2084in]
{resolvekink-t.eps}}+t(q-q^{-1})\text{ }%
\raisebox{-0.2214in}
{\includegraphics[height=0.6244in,width=0.2084in]
{resolvekink-t.eps}} & 0 \\
0 & tq\text{ }%
\raisebox{-0.2214in}
{\includegraphics[height=0.6244in,width=0.2084in]
{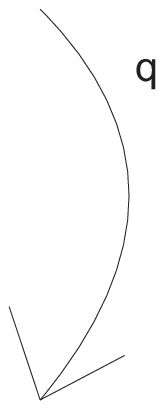}}%
\end{array}%
\right) \\
& =\left(
\begin{array}{cc}
tq\text{ }%
\raisebox{-0.2006in}
{\includegraphics[height=0.4765in,width=0.2776in]
{vertical-t.eps}} & 0 \\
0 & tq\text{ }%
\raisebox{-0.2006in}
{\includegraphics[height=0.4765in,width=0.2776in]
{vertical-q.eps}}%
\end{array}%
\right) =tq\cdot \phi _{q;t,q}\left(
\raisebox{-0.1609in}
{\includegraphics[height=0.4756in,width=0.2811in]
{vertical-tq.eps}}\right)
\end{align}%
Here we used the normalization $\phi _{q;t,q}\left(
\raisebox{-0.1609in}
{\includegraphics[height=0.4947in,width=0.4601in]
{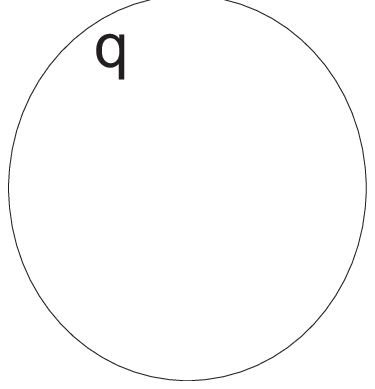}}\right) $ $=1$, which agrees with skein relations.

The consistency of the negative twist relations with the functor $\phi
_{q;t,q}$ is an almost identical calculation. The consistency of the skein
relation with the functor $\phi _{q;t,q}$ is given below:

\begin{equation*}
\phi _{q;t,q}\left(
\raisebox{-0.1003in}
{\includegraphics[height=0.3208in,width=0.3563in]
{positivecross-tqtq.eps}}\right) -\phi _{q;t,q}\left(
\raisebox{-0.1003in}
{\includegraphics[height=0.3217in,width=0.3563in]
{negativecross-tqtq.eps}}\right)
\end{equation*}%
\begin{equation*}
=\left(
\begin{array}{cccc}
\raisebox{-0.1003in}
{\includegraphics[height=0.3208in,width=0.3563in]
{positivecross-tt.eps}}-%
\raisebox{-0.1003in}
{\includegraphics[height=0.3217in,width=0.3563in]
{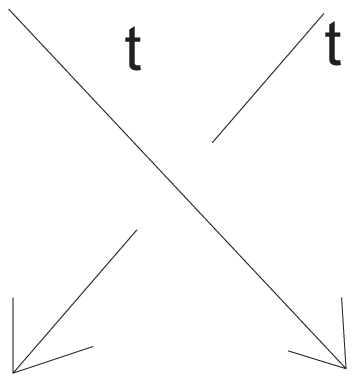}} & 0 & 0 & 0 \\
0 & (q-q^{-1})%
\raisebox{-0.1003in}
{\includegraphics[height=0.3477in,width=0.3381in]
{resolvecorss-tq.eps}} & 0 & 0 \\
0 & 0 & (q-q^{-1})%
\raisebox{-0.1003in}
{\includegraphics[height=0.3477in,width=0.3381in]
{resolvecorss-qt.eps}} & 0 \\
0 & 0 & 0 &
\raisebox{-0.1003in}
{\includegraphics[height=0.3208in,width=0.3563in]
{positivecross-qq.eps}}-%
\raisebox{-0.1003in}
{\includegraphics[height=0.3217in,width=0.3563in]
{negativecross-qq.eps}}%
\end{array}%
\right)
\end{equation*}%
\begin{equation*}
=\left(
\begin{array}{cccc}
(q-q^{-1})%
\raisebox{-0.2006in}
{\includegraphics[height=0.5076in,width=0.4947in]
{resolvecorss-tt.eps}} & 0 & 0 & 0 \\
0 & (q-q^{-1})%
\raisebox{-0.2006in}
{\includegraphics[height=0.5076in,width=0.4947in]
{resolvecorss-tq.eps}} & 0 & 0 \\
0 & 0 & (q-q^{-1})%
\raisebox{-0.2006in}
{\includegraphics[height=0.5076in,width=0.4947in]
{resolvecorss-qt.eps}} & 0 \\
0 & 0 & 0 & (q-q^{-1})%
\raisebox{-0.2006in}
{\includegraphics[height=0.5076in,width=0.4947in]
{resolvecorss-qq.eps}}%
\end{array}%
\right)
\end{equation*}

\begin{equation*}
=(q-q^{-1})\left(
\begin{array}{cccc}
\raisebox{-0.2006in}
{\includegraphics[height=0.5076in,width=0.4947in]
{resolvecorss-tt.eps}} & 0 & 0 & 0 \\
0 &
\raisebox{-0.2006in}
{\includegraphics[height=0.5076in,width=0.4947in]
{resolvecorss-tq.eps}} & 0 & 0 \\
0 & 0 &
\raisebox{-0.2006in}
{\includegraphics[height=0.5076in,width=0.4947in]
{resolvecorss-qt.eps}} & 0 \\
0 & 0 & 0 &
\raisebox{-0.2006in}
{\includegraphics[height=0.5076in,width=0.4947in]
{resolvecorss-qq.eps}}%
\end{array}%
\right) =\phi _{q;t,q}\left(
\raisebox{-0.1609in}
{\includegraphics[height=0.5076in,width=0.4947in]
{resolvecorss-tqtq.eps}}\right)
\end{equation*}
\end{proof}

\subsubsection{The recursion relation for invariants of links}

Let $\mathcal{L}$ be an oriented framed link and $D_{\mathcal{L}}$ be its
diagram with the blackboard framing. The image $[D_{\mathcal{L}}]_{q,t}$ of $%
D_{\mathcal{L}}$ in the skein module $Hom_{{}\mathcal{H}_{q;t}}(1,1)$, where
$1$ is the identity object (empty sequence), is the HOMFLY invariant $%
H_{q,t}(\mathcal{L})$ of $\mathcal{L}$. That is
\begin{equation*}
H_{q,t}(\mathcal{L})=[D_{\mathcal{L}}]_{q,t}
\end{equation*}

Note that the corresponding invariant of oriented but unframed links is
\begin{equation*}
<[\mathcal{L}]>=t^{n_{+}-n_{-}}H_{q,t}(\mathcal{L})\text{,}
\end{equation*}

where $n_{+}$ is the number of {%
\includegraphics[height=0.1323in,width=0.2828in]
{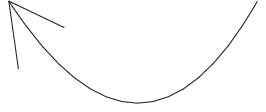}} extrema and $n_{-}$ is the number of ${%
\includegraphics[height=0.1323in,width=0.2231in]
{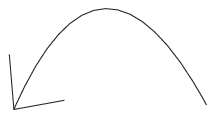}}$ extrema.

The functor $\phi _{q;t,q}$ gives the recursion relation for HOMFLY
polynomials of links. For the invariant of framed links we have:
\begin{equation}
H_{q,tq}(\mathcal{L})=\sum_{D^{\prime }}w(D_{\mathcal{L}},D^{\prime
})_{q,t}[D^{\prime }]_{q,t}
\end{equation}%
Here the summation is taken over all diagrams $D^{\prime }$ which are
obtained from $D_{L}$ by all possible replacements of elementary diagrams
colored by $qt$ by elementary diagrams colored by $t$ and $q$ according to
the way how functor $\phi _{q;t,q}$ acts and $w(D_{\mathcal{L}},D^{\prime
})_{q,t}$ is an integer coefficients polynomial of variables $q$ and $t$.

\section{Recursion corresponding to $sl_{n}\supset sl_{k}\times sl_{n-k}$}

\label{sec3}

In this section we will generalize the results of the previous section by
constructing the functor $\phi _{q;s,u}$ between skein categories ${}%
\mathcal{H}_{q;su}$ and ${}\mathcal{H}_{q;s,u}$. The category ${}\mathcal{H}%
_{q;s,u}$ is defined in the previous section.

Object $(\varepsilon _{1},...,\varepsilon _{n})$ of ${}\mathcal{H}_{q;su}$
is mapped by $\phi _{q;s,u}$ to $\oplus ((\epsilon _{1},s_{1}),\dots
,(\epsilon _{n},s_{n}))$ where the sum is taken over all possible
substitutions $s_{i}=s$ or $s_{i}=u$.

Let $t=su$. Define the functor on elementary diagrams as
\begin{equation}
\phi _{q;s,u}\left(
\raisebox{-0.1609in}
{\includegraphics[height=0.4679in,width=0.5197in]
{positivecross-tt.eps}}\right) =\left(
\begin{array}{cccc}
\raisebox{-0.1609in}
{\includegraphics[height=0.4679in,width=0.5197in]
{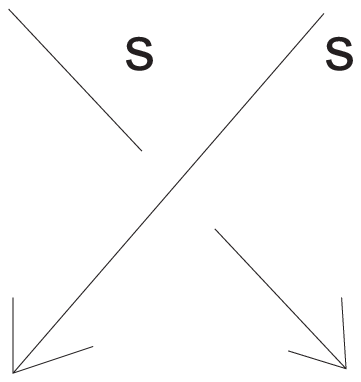}} & 0 & 0 & 0 \\
0 & (q-q^{-1})%
\raisebox{-0.1609in}
{\includegraphics[height=0.5076in,width=0.4947in]
{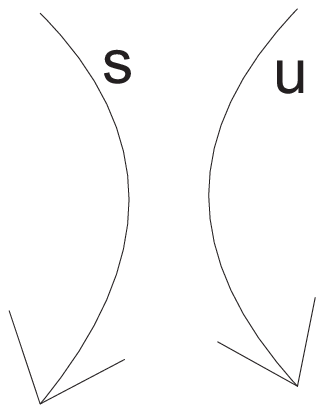}} &
\raisebox{-0.1609in}
{\includegraphics[height=0.4679in,width=0.5197in]
{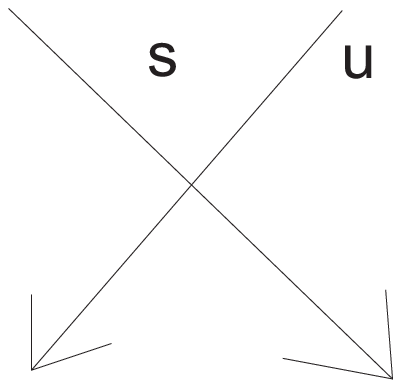}} & 0 \\
&  &  &  \\
0 &
\raisebox{-0.1609in}
{\includegraphics[height=0.4679in,width=0.5197in]
{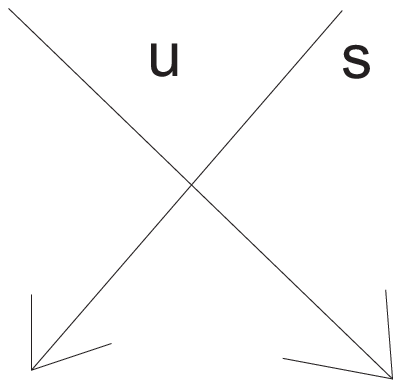}} & 0 & 0 \\
0 & 0 & 0 &
\raisebox{-0.1609in}
{\includegraphics[height=0.4679in,width=0.5197in]
{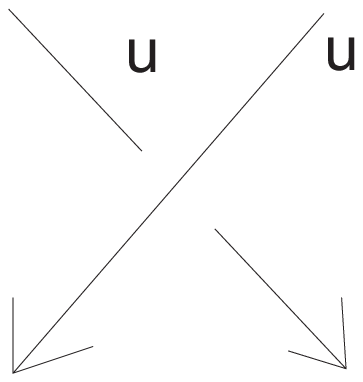}}%
\end{array}%
\right)
\end{equation}

\begin{equation}
\phi _{q;s,u}\left(
\raisebox{-0.1609in}{\includegraphics[height=0.4679in,width=0.5197in]
{negativecross-tt.eps}}\right) =\left(
\begin{array}{cccc}
\raisebox{-0.1609in}
{\includegraphics[height=0.4679in,width=0.5197in]
{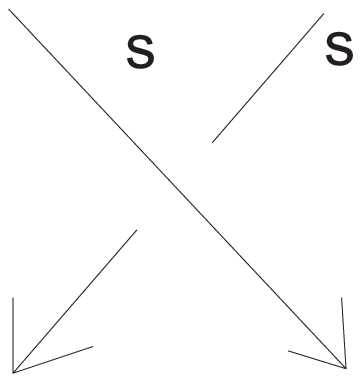}} & 0 & 0 & 0 \\
0 & 0 &
\raisebox{-0.1609in}
{\includegraphics[height=0.4679in,width=0.5197in]
{cross-su.eps}} & 0 \\
&  &  &  \\
0 &
\raisebox{-0.1609in}
{\includegraphics[height=0.4679in,width=0.5197in]
{cross-us.eps}} & -(q-q^{-1})%
\raisebox{-0.1609in}
{\includegraphics[height=0.5076in,width=0.4947in]
{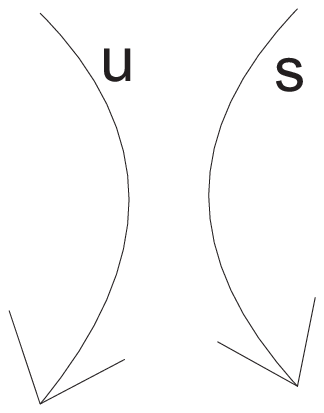}} & 0 \\
0 & 0 & 0 &
\raisebox{-0.1609in}
{\includegraphics[height=0.4679in,width=0.5197in]
{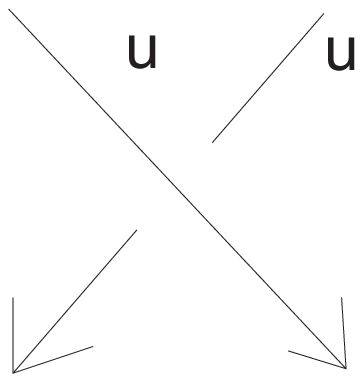}}%
\end{array}%
\right)
\end{equation}

\begin{equation}
\phi _{q;s,u}\left(
\raisebox{-0.1609in}
{\includegraphics[height=0.5076in,width=0.4947in]
{resolvecorss-tt.eps}}\right) =\left(
\begin{array}{cccc}
\raisebox{-0.1609in}
{\includegraphics[height=0.5076in,width=0.4947in]
{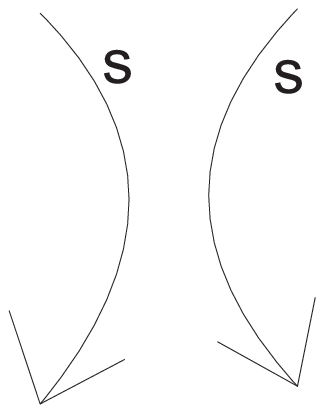}} & 0 & 0 & 0 \\
0 &
\raisebox{-0.1609in}
{\includegraphics[height=0.5076in,width=0.4947in]
{resolvecorss-su.eps}} & 0 & 0 \\
0 & 0 &
\raisebox{-0.1609in}
{\includegraphics[height=0.5076in,width=0.4947in]
{resolvecorss-us.eps}} & 0 \\
0 & 0 & 0 &
\raisebox{-0.1609in}
{\includegraphics[height=0.5076in,width=0.4947in]
{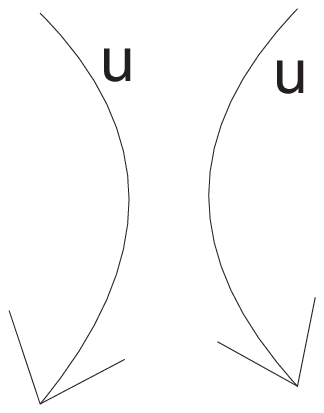}}%
\end{array}%
\right)
\end{equation}

\begin{equation}
\phi _{q;s,u}\left(
\raisebox{-0.1609in}
{\includegraphics[height=0.4765in,width=0.2776in]
{vertical-t.eps}}\right) =\left(
\begin{array}{cc}
\raisebox{-0.1609in}
{\includegraphics[height=0.4756in,width=0.2724in]
{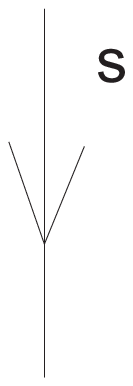}} & 0 \\
0 &
\raisebox{-0.1609in}
{\includegraphics[height=0.4756in,width=0.2724in]
{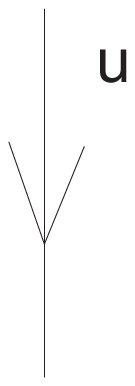}}%
\end{array}%
\right)
\end{equation}%
\begin{equation}
\phi _{q;s,u}\left(
\raisebox{-0.0502in}
{\includegraphics[height=0.1513in,width=0.3442in]
{rightlower-t.eps}}\right) =\left(
\begin{array}{c}
\raisebox{-0.1609in}
{\includegraphics[height=0.1513in,width=0.3442in]
{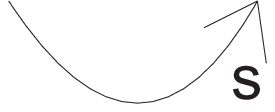}} \\
0 \\
0 \\
\raisebox{-0.1609in}
{\includegraphics[height=0.1513in,width=0.3442in]
{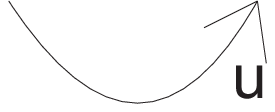}}%
\end{array}%
\right) \text{, }\phi _{q;s,u}\left(
\raisebox{-0.0502in}
{\includegraphics[height=0.1513in,width=0.3312in]
{leftlower-t.eps}}\right) =\left(
\begin{array}{c}
u\text{ }%
\raisebox{-0.0502in}
{\includegraphics[height=0.1513in,width=0.3312in]
{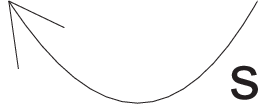}} \\
0 \\
0 \\
s^{-1}\text{ }%
\raisebox{-0.0502in}
{\includegraphics[height=0.1513in,width=0.3312in]
{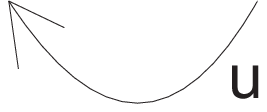}}%
\end{array}%
\right)
\end{equation}%
\begin{equation}
\phi _{q;s,u}\left(
\raisebox{-0.0502in}
{\includegraphics[height=0.1574in,width=0.339in]
{righttop-t.eps}}\right) =\left(
\begin{array}{cccc}
\raisebox{-0.0502in}
{\includegraphics[height=0.1574in,width=0.3528in]
{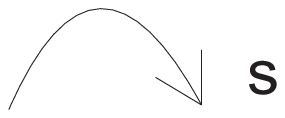}} & 0 & 0 &
\raisebox{-0.0502in}
{\includegraphics[height=0.1574in,width=0.3528in]
{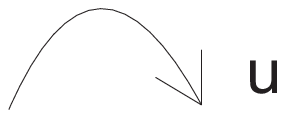}}%
\end{array}%
\right) \text{, }\phi _{q;s,u}\left(
\raisebox{-0.0502in}
{\includegraphics[height=0.1565in,width=0.2984in]
{lefttop-t.eps}}\right) =\left(
\begin{array}{cccc}
u^{-1}\text{ }%
\raisebox{-0.0502in}
{\includegraphics[height=0.1565in,width=0.3148in]
{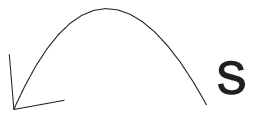}} & 0 & 0 & s\text{ }%
\raisebox{-0.0502in}
{\includegraphics[height=0.1565in,width=0.3148in]
{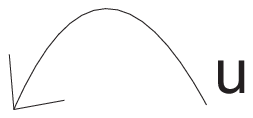}}%
\end{array}%
\right)
\end{equation}

The following statement is a straightforward generalization of the
corresponding theorem from the previous section.

\begin{theorem}
There exists unique covariant braided monoidal functor ${}\mathcal{H}%
_{q;su}\rightarrow \mathcal{H}{}_{q;s,u}$ which acts on elementary diagrams
as above.
\end{theorem}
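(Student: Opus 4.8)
The plan is to imitate, essentially verbatim, the proof of the corresponding theorem in Section~\ref{sec2}, the only new feature being that the second color is now a generic variable $u$ rather than the HOMFLY base $q$. First I would observe that every morphism of $\mathcal{H}_{q;su}$ is built from the elementary diagrams---the two crossings, the two cups and two caps, the resolution, and the vertical strand---by composition $\circ$ and tensor product $\otimes$. Since a braided monoidal functor must commute with both operations, its values on the elementary diagrams determine it everywhere, which gives uniqueness immediately. For existence I would extend $\phi_{q;s,u}$ to all morphisms by $\phi_{q;s,u}(\alpha\circ\beta)=\phi_{q;s,u}(\alpha)\circ\phi_{q;s,u}(\beta)$ and $\phi_{q;s,u}(\alpha\otimes\beta)=\phi_{q;s,u}(\alpha)\otimes\phi_{q;s,u}(\beta)$, and then verify consistency with the defining relations of $\mathcal{H}_{q;su}$. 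Exactly as in Section~\ref{sec2}, the consistency with the ambient (Reidemeister) relations is inherited from the target category, so that the content of the statement is the consistency of $\phi_{q;s,u}$ with the HOMFLY skein relation and with the two framing relations, all at color $t=su$.

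The skein relation is immediate from the matrices defining $\phi_{q;s,u}$ on the two crossings. Forming the difference of the images of the positive and negative crossings, the two off-diagonal symmetric crossings cancel, the two same-color diagonal entries reduce to $(q-q^{-1})$ times the corresponding resolutions by the skein relations already valid in $\mathcal{H}_{q;s}$ and $\mathcal{H}_{q;u}$, and the two mixed diagonal entries produce $(q-q^{-1})$ times the mixed resolutions from the complementary resolution terms carried by the two crossing matrices. Reading off the four blocks gives $\phi_{q;s,u}(\sigma_{+})-\phi_{q;s,u}(\sigma_{-})=(q-q^{-1})\,\phi_{q;s,u}(\rho)$, where $\sigma_{\pm}$ and $\rho$ denote the two crossings and the resolution; this is precisely the image of the skein relation.

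The substantive step is the framing relation, which I would check by applying $\phi_{q;s,u}$ to a positive kink and decomposing it as a cup, a crossing, and a cap. Its image is a diagonal $2\times2$ matrix indexed by the color carried by the through-strand. On the block where the through-strand is colored $u$ the only contribution is the honest $u$-colored kink weighted by the cap factor $s$, which by the $u$-twist relation equals $su$ times the vertical $u$-strand. On the block where the through-strand is colored $s$ there are two contributions: the honest $s$-colored kink, with cap factor $u^{-1}$ and twist factor $s$, contributing $u^{-1}s$; and the resolution term, in which the through-strand passes a $u$-colored loop that is then smoothed into a closed $u$-circle, with cap factor $s$ and skein coefficient $q-q^{-1}$. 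This is the single place where the generic second color intervenes: in Section~\ref{sec2} the corresponding circle was colored $q$ and had value $1$, whereas here the $u$-colored unknot evaluates to $(u-u^{-1})/(q-q^{-1})$. The resolution term therefore contributes $s(q-q^{-1})\cdot(u-u^{-1})/(q-q^{-1})=s(u-u^{-1})$, the circle value cancelling the skein coefficient. Adding, $u^{-1}s+s(u-u^{-1})=su$, so both diagonal entries equal $su$ and $\phi_{q;s,u}(\text{kink}_{+})=su\cdot\phi_{q;s,u}(\mathrm{id})$.

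The negative framing relation is the mirror-image computation: it uses the framing factors $u$ and $s^{-1}$ of the turn-backs together with the resolution term in the opposite mixed sector of the negative-crossing matrix, and the same cancellation of the $u$-circle value against $q-q^{-1}$ yields the factor $(su)^{-1}$. With the skein relation and both framing relations verified, the assignment descends to the quotient category and defines the required functor. The one genuine obstacle is precisely this framing computation: unlike in the previous section, the auxiliary circle no longer has value $1$, and one must rely on the cancellation of its generic value $(u-u^{-1})/(q-q^{-1})$ against the $(q-q^{-1})$ coming from the resolution term; everything else is formal and identical to Section~\ref{sec2}.
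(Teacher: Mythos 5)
Your proof is correct and follows exactly the route the paper intends: the paper gives no separate argument for this theorem, stating only that it is a straightforward generalization of the Section~1 proof, and your write-up is precisely that proof carried over, with the skein relation read off blockwise from the crossing matrices and the framing relations checked by closing a crossing. You also correctly isolate the one place where the generalization is not verbatim, namely that the auxiliary circle is now colored $u$ and contributes $(u-u^{-1})/(q-q^{-1})$ rather than $1$, so that $u^{-1}s+s(u-u^{-1})=su$.
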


When $s=q^{k}$ and $u=q^{n-k}$ the functor $\phi _{q;s,u}$ restricts to the
functor between quotient categories and becomes the restriction functor from
the category of $U_{q}(sl_{n})$ modules to the category of $%
U_{q}(sl_{k})\otimes U_{q}(sl_{n-k})$ modules.

\section{Recursion corresponding to $so_{2n}\supset sl_{n}$ and $%
sp_{2n}\supset sl_{n}$}

\label{sec4}

Let $\underline{Tan}^{\prime }$ be the category of non-oriented framed
tangles. Objects in this category are integers, morphisms between objects $%
(n)$ and $(m)$ are non-oriented framed tangles with blackboard framing at
the ends, with $n$ upper ends and with $m$ lower ends. For a ring $A$,
define $\underline{Tan}_{A}^{\prime }$ as the additive $A-$linear category
where objects are direct sum of objects in $\underline{Tan}^{\prime }$ and
morphisms are linear combination of morphisms from $\underline{Tan}^{\prime
} $.

Kauffman invariants are morphisms in the quotient category $\mathcal{K}%
_{q;s} $ of $\underline{Tan}_{{}{}\mathbf{C}[q^{\pm 1},s^{\pm 1}]}^{\prime }$
subject to the following relations

\begin{equation}
\raisebox{-0.2006in}
{\includegraphics[height=0.4687in,width=0.4393in]
{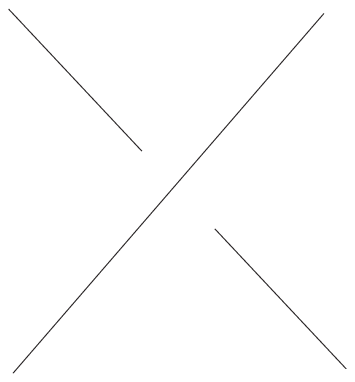}}-%
\raisebox{-0.2006in}
{\includegraphics[height=0.4687in,width=0.4393in]
{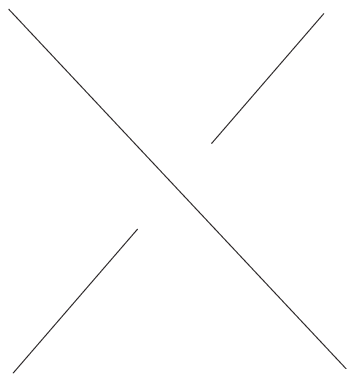}}=(q-q^{-1})\text{ }%
\raisebox{-0.2006in}
{\includegraphics[height=0.5085in,width=0.3286in]
{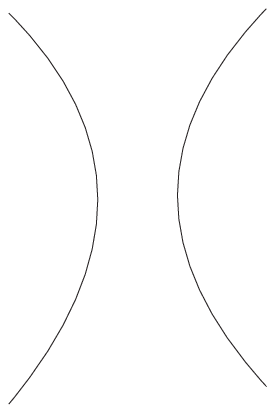}}-(q-q^{-1})\text{ }%
\raisebox{-0.2006in}
{\includegraphics[height=0.4912in,width=0.3485in]
{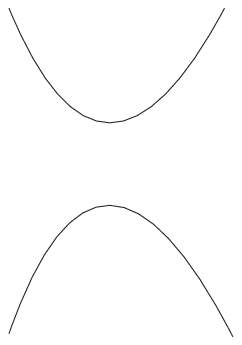}}  \label{Kauffman}
\end{equation}%
and
\begin{equation}
\raisebox{-0.2006in}
{\includegraphics[height=0.5336in,width=0.3503in]
{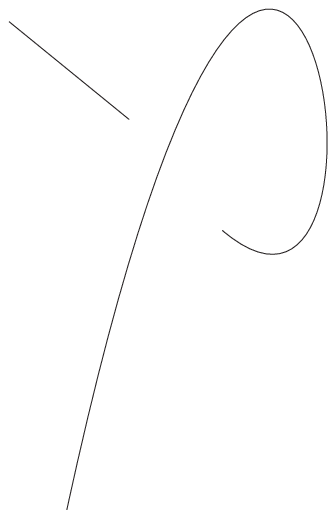}}=s\text{ }%
\raisebox{-0.2006in}
{\includegraphics[height=0.5613in,width=0.147in]
{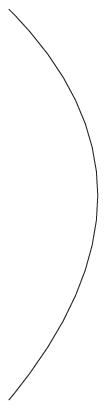}},\text{ \ \ }%
\raisebox{-0.2006in}
{\includegraphics[height=0.5725in,width=0.3563in]
{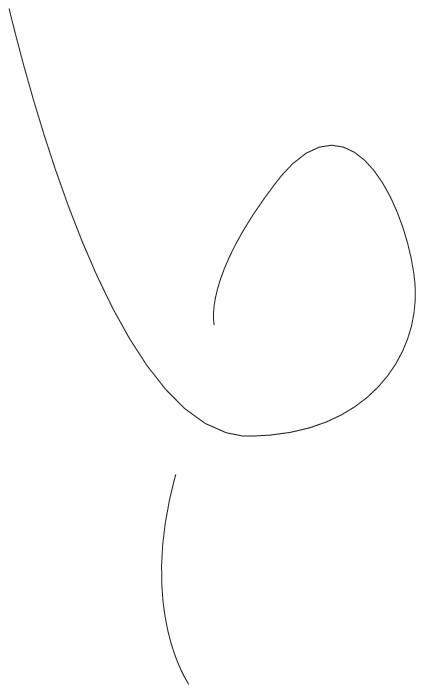}}=s^{-1}\text{ }%
\raisebox{-0.1712in}
{\includegraphics[height=0.5613in,width=0.147in]
{K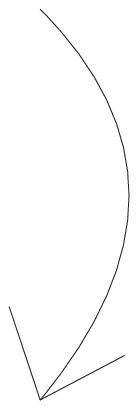}}  \label{Kauffman Kink}
\end{equation}

The following theorem describe the covariant functor $\psi _{q;t}$ from $%
\mathcal{K}_{q;t^{2}q^{-1}}$ to $\mathcal{H}_{q;t}$. The category $\mathcal{K%
}_{q;s}$ is a braided monoidal category. Its morphisms are compositions of
tensor products of elementary morphisms \cite{RT}

\begin{theorem}
\label{Thm Kauffman 1} There exists unique functor $\psi _{q;t}$ of braided
monoidal categories $\mathcal{K}_{q;t^{2}q^{-1}}\rightarrow \mathcal{H}%
_{q;t} $ which acts on objects as $\psi _{q;t}(n)=\underset{\varepsilon
_{1},...,\varepsilon _{n}=\pm }{\oplus }(\varepsilon _{1},...,\varepsilon
_{n})$ with its action on elementary morphisms described below
\begin{equation}
\psi _{q;t}\left(
\raisebox{-0.2006in}
{\includegraphics[height=0.4687in,width=0.4393in]
{K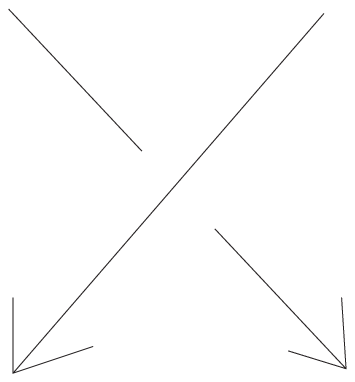}}\right) =\left(
\begin{array}{cccc}
\raisebox{-0.2006in}
{\includegraphics[height=0.4687in,width=0.4393in]
{positivecross.eps}} & 0 & 0 & 0 \\
0 & a%
\raisebox{-0.2006in}
{\includegraphics[height=0.5076in,width=0.3779in]
{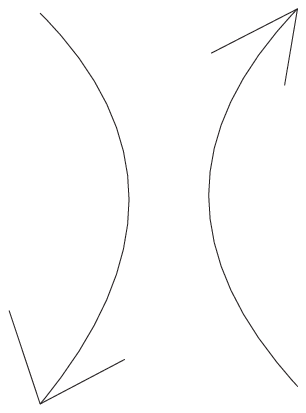}}+b%
\raisebox{-0.2006in}
{\includegraphics[height=0.4912in,width=0.3494in]
{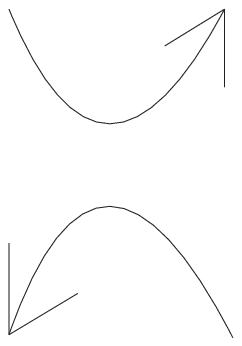}} &
\raisebox{-0.2006in}
{\includegraphics[height=0.4687in,width=0.4393in]
{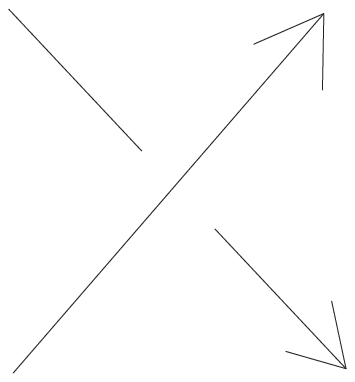}} & 0 \\
&  &  &  \\
0 &
\raisebox{-0.2006in}
{\includegraphics[height=0.4687in,width=0.4393in]
{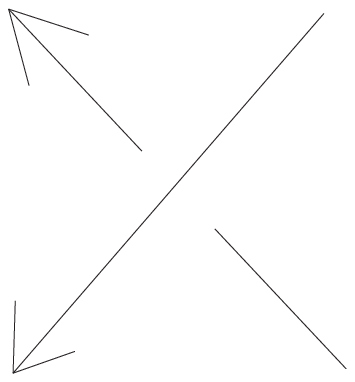}} & 0 & 0 \\
0 & 0 & 0 &
\raisebox{-0.2006in}
{\includegraphics[height=0.4687in,width=0.4393in]
{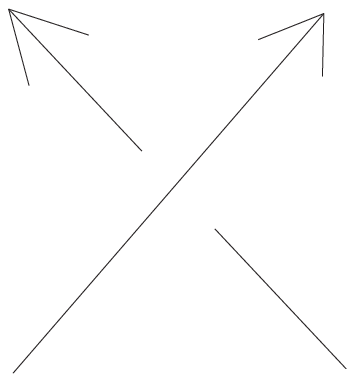}}%
\end{array}%
\right)
\end{equation}
\end{theorem}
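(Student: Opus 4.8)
The plan is to follow the same strategy as in the proof of Theorem~1.1. Since the elementary morphisms (the two crossings, the cups and caps, and the two planar resolutions) generate $\mathcal{K}_{q;t^{2}q^{-1}}$ as a braided monoidal category, the prescribed values will determine $\psi_{q;t}$ on every morphism by composition and tensor product, so uniqueness is automatic. To establish existence it suffices to check that $\psi_{q;t}$ is consistent with all defining relations of $\mathcal{K}_{q;t^{2}q^{-1}}$. I would first record the values of $\psi_{q;t}$ on the elementary morphisms not displayed in the statement (the negative crossing, the cup and cap, and the identity and contraction resolutions) by the analogous block matrices, where the object $(n)$ is replaced by the direct sum over the $2^{n}$ orientations $(\varepsilon_{1},\dots,\varepsilon_{n})$, mirroring the decomposition of the vector representation of $so_{2n}$ or $sp_{2n}$ into $V\oplus V^{*}$ under $sl_{n}$. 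Every elementary Kauffman morphism then becomes a matrix whose entries are HOMFLY morphisms in $\mathcal{H}_{q;t}$ between the oriented summands, and each relation to be verified becomes a matrix identity that decouples into blocks indexed by orientation sectors.

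The relations I would verify are the Kauffman skein relation (\ref{Kauffman}), the twist relations (\ref{Kauffman Kink}), and the isotopy relations of the underlying ribbon category: Reidemeister~II, Reidemeister~III (the Yang--Baxter relation), the zig-zag identities relating cups and caps, and the compatibility of the braiding with duality that allows a crossing to be rotated past a cap or cup. For (\ref{Kauffman}) the diagonal blocks $(+,+)$ and $(-,-)$ should reduce immediately to the HOMFLY skein relation (\ref{HOMFLY}), while the mixed blocks $(+,-)$ and $(-,+)$ produce the extra contraction term; matching the two sides is what pins down the coefficients $a$ and $b$ in the displayed matrix. For (\ref{Kauffman Kink}) the computation parallels the positive-twist calculation at the end of the proof of Theorem~1.1: a kink on a non-oriented strand yields, sector by sector, a HOMFLY kink contributing a factor $t^{\pm1}$ by (\ref{HOMFLY}) together with contributions from the contraction resolution, and requiring the total scalar to be $s^{\pm1}$ forces precisely the specialization $s=t^{2}q^{-1}$. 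Simultaneously the value of a non-oriented unknot, computed as the sum over orientations of HOMFLY unknots weighted by the cup and cap normalizations, must equal the Kauffman loop value $1+(s-s^{-1})/(q-q^{-1})$, which fixes the scalars attached to the cup and cap.

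The hardest part will be the topological relations in the mixed sectors. Because the off-diagonal blocks superpose an honest HOMFLY crossing with the cup--cap contraction, the Yang--Baxter relation and the duality-compatibility become nontrivial matrix identities (of size up to $8\times 8$ for the braid relation), whose entries I would need to reduce, block by block, to identities already established in $\mathcal{H}_{q;t}$ together with the chosen values of $a$ and $b$ and the specialization $s=t^{2}q^{-1}$. As an independent check on this bookkeeping I would invoke representation theory: when $t=q^{N}$ the category $\mathcal{K}_{q;t^{2}q^{-1}}$ maps to modules over $U_{q}(so_{2N})$ (resp. $U_{q}(sp_{2N})$) and $\mathcal{H}_{q;t}$ to modules over $U_{q}(sl_{N})$, and under the embedding $U_{q}(sl_{N})\subset U_{q}(so_{2N})$ (resp. $U_{q}(sp_{2N})$) the vector representation restricts as $V\oplus V^{*}$, so that $\psi_{q;t}$ is there exactly the restriction functor. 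Since all the relations are Laurent-polynomial identities in $q$ and $t$ and hold for infinitely many values $t=q^{N}$, they must hold identically, which confirms existence and corroborates the explicit values of $a$ and $b$.
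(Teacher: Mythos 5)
Your plan coincides with the paper's proof: uniqueness because the elementary morphisms generate, and existence by direct verification of the Kauffman skein relation (whose diagonal blocks reduce to the HOMFLY relation (\ref{HOMFLY}) and whose mixed blocks pin down the coefficients $a,b,c,d$), of the kink relations (which produce the scalar $t^{2}q^{-1}=s$), and of the remaining isotopy relations such as invertibility of the braiding and the zig-zag identities. Your representation-theoretic specialization $t=q^{N}$ should be kept as the sanity check you call it (the paper likewise only remarks on it after the proof), since deducing skein-module identities from identities of module morphisms would additionally require faithfulness of the evaluation functors; your main line of argument, the block-by-block computation in $\mathcal{H}_{q;t}$, is exactly the paper's.
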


\begin{equation}
\psi _{q;t}\left(
\raisebox{-0.2006in}
{\includegraphics[height=0.4687in,width=0.4393in]
{K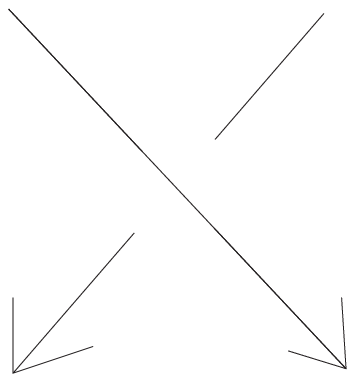}}\right) =\left(
\begin{array}{cccc}
\raisebox{-0.2006in}
{\includegraphics[height=0.4687in,width=0.4393in]
{negativecross.eps}} & 0 & 0 & 0 \\
0 & 0 &
\raisebox{-0.2006in}
{\includegraphics[height=0.4687in,width=0.4393in]
{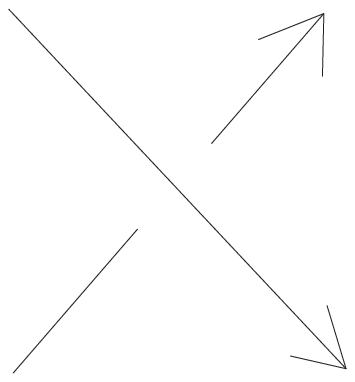}} & 0 \\
&  &  &  \\
0 &
\raisebox{-0.2006in}
{\includegraphics[height=0.4687in,width=0.4393in]
{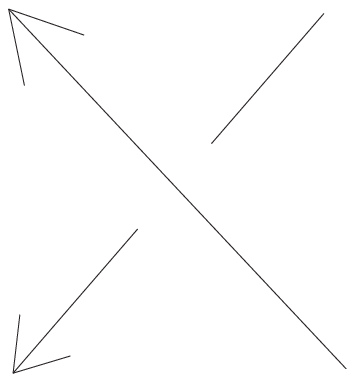}} & c%
\raisebox{-0.2006in}
{\includegraphics[height=0.5076in,width=0.3615in]
{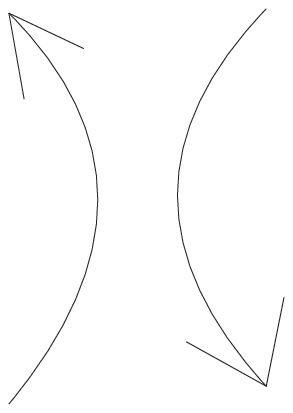}}+d%
\raisebox{-0.2006in}
{\includegraphics[height=0.4903in,width=0.3771in]
{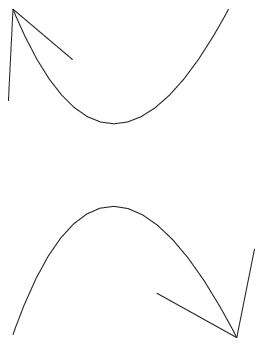}} & 0 \\
0 & 0 & 0 &
\raisebox{-0.2006in}
{\includegraphics[height=0.4687in,width=0.4393in]
{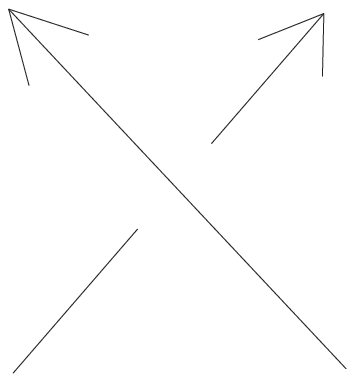}}%
\end{array}%
\right)
\end{equation}%
\begin{equation}
\psi _{q;t}\left(
\raisebox{-0.2006in}
{\includegraphics[height=0.5085in,width=0.339in]
{K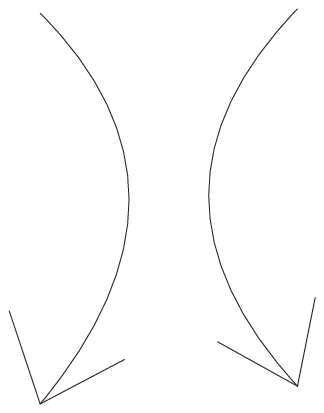}}\right) =\left(
\begin{array}{cccc}
\raisebox{-0.2006in}
{\includegraphics[height=0.5085in,width=0.3995in]
{resolvecorss.eps}} & 0 & 0 & 0 \\
0 &
\raisebox{-0.2006in}
{\includegraphics[height=0.5076in,width=0.3779in]
{lower-upper-resolvecorss.eps}} & 0 & 0 \\
0 & 0 &
\raisebox{-0.2006in}
{\includegraphics[height=0.5076in,width=0.3615in]
{upper-lower-resolvecorss.eps}} & 0 \\
0 & 0 & 0 &
\raisebox{-0.2006in}
{\includegraphics[height=0.5085in,width=0.339in]
{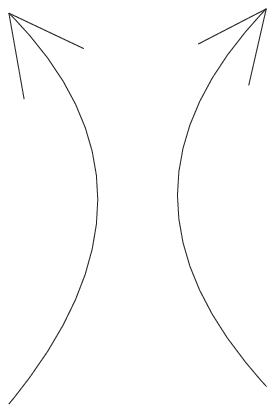}}%
\end{array}%
\right)
\end{equation}

\begin{equation}
\psi _{q;t}\left(
\raisebox{-0.0502in}
{\includegraphics[height=0.1513in,width=0.3312in]
{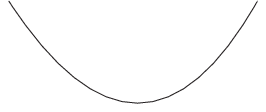}}\right) =\left(
\begin{array}{c}
0 \\
\raisebox{-0.0502in}
{\includegraphics[height=0.1513in,width=0.3442in]
{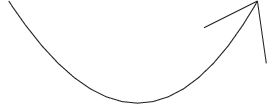}} \\
tq^{-1}\text{ }%
\raisebox{-0.0502in}
{\includegraphics[height=0.1513in,width=0.3312in]
{leftlower.eps}} \\
0%
\end{array}%
\right) \text{, }\psi _{q;t}\left(
\raisebox{-0.0502in}
{\includegraphics[height=0.1574in,width=0.2646in]
{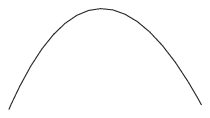}}\right) =\left(
\begin{array}{cccc}
0 & t^{-1}q\text{ }%
\raisebox{-0.0502in}
{\includegraphics[height=0.1565in,width=0.2681in]
{lefttop.eps}} &
\raisebox{-0.0502in}
{\includegraphics[height=0.1574in,width=0.2646in]
{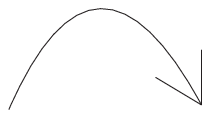}} & 0%
\end{array}%
\right)
\end{equation}

Here coefficients $a$, $b$, $c$ and $d$ are

\begin{equation}
a=q-q^{-1},b=-(q-q^{-1})t^{-1}q,c=-(q-q^{-1})\text{ and }d=(q-q^{-1})tq^{-1}
\end{equation}

\begin{proof}
To prove the theorem we should check skein relation. Here, as in section $1$
we assume that the functor $\psi _{q;t}$ bring the left side of equalities
of diagrams below to the right side. Now, let us check skein relations. The
image of the left side of (\ref{Kauffman}) in $\mathcal{H}_{q;t}$ is

$\left(
\begin{array}{cccc}
\raisebox{-0.1003in}
{\includegraphics[height=0.3217in,width=0.3027in]
{positivecross.eps}}-%
\raisebox{-0.1003in}
{\includegraphics[height=0.3217in,width=0.3027in]
{negativecross.eps}} & 0 & 0 & 0 \\
0 & a%
\raisebox{-0.1003in}
{\includegraphics[height=0.3494in,width=0.2612in]
{lower-upper-resolvecorss.eps}}+b%
\raisebox{-0.1003in}
{\includegraphics[height=0.3598in,width=0.2569in]
{right-left-vertical-resolvecorss.eps}} &
\raisebox{-0.1003in}
{\includegraphics[height=0.3217in,width=0.3027in]
{right-negativecross.eps}}-%
\raisebox{-0.1003in}
{\includegraphics[height=0.3217in,width=0.3027in]
{right-positivecross.eps}} & 0 \\
&  &  &  \\
0 &
\raisebox{-0.1003in}
{\includegraphics[height=0.3217in,width=0.3027in]
{left-negativecross.eps}}-%
\raisebox{-0.1003in}
{\includegraphics[height=0.3217in,width=0.3027in]
{left-positivecross.eps}} & -c%
\raisebox{-0.1003in}
{\includegraphics[height=0.3494in,width=0.2499in]
{upper-lower-resolvecorss.eps}}-d%
\raisebox{-0.1003in}
{\includegraphics[height=0.3598in,width=0.2767in]
{left-right-vertical-resolvecorss.eps}} & 0 \\
0 & 0 & 0 &
\raisebox{-0.1003in}
{\includegraphics[height=0.3217in,width=0.3027in]
{upper-positivecross.eps}}-%
\raisebox{-0.1003in}
{\includegraphics[height=0.3217in,width=0.3027in]
{upper-negativecross.eps}}%
\end{array}%
\right) $

$=\left(
\begin{array}{cccc}
(q-q^{-1})%
\raisebox{-0.1003in}
{\includegraphics[height=0.3494in,width=0.275in]
{resolvecorss.eps}} & 0 & 0 & 0 \\
0 & a%
\raisebox{-0.1003in}
{\includegraphics[height=0.3494in,width=0.2612in]
{lower-upper-resolvecorss.eps}}+b%
\raisebox{-0.1003in}
{\includegraphics[height=0.3598in,width=0.2569in]
{right-left-vertical-resolvecorss.eps}} &
\raisebox{-0.1003in}
{\includegraphics[height=0.3217in,width=0.3027in]
{right-negativecross.eps}}-%
\raisebox{-0.1003in}
{\includegraphics[height=0.3217in,width=0.3027in]
{right-positivecross.eps}} & 0 \\
&  &  &  \\
0 &
\raisebox{-0.1003in}
{\includegraphics[height=0.3217in,width=0.3027in]
{left-negativecross.eps}}-%
\raisebox{-0.1003in}
{\includegraphics[height=0.3217in,width=0.3027in]
{left-positivecross.eps}} & -c%
\raisebox{-0.1003in}
{\includegraphics[height=0.3494in,width=0.2499in]
{upper-lower-resolvecorss.eps}}-d%
\raisebox{-0.1003in}
{\includegraphics[height=0.3598in,width=0.2767in]
{left-right-vertical-resolvecorss.eps}} & 0 \\
0 & 0 & 0 & (q-q^{-1})%
\raisebox{-0.1003in}
{\includegraphics[height=0.3485in,width=0.2352in]
{upper-uper-resolvecorss.eps}}%
\end{array}%
\right) $

The functor $\psi _{q;t}$ maps the right hand side of (\ref{Kauffman}) to

$(q-q^{-1})\left(
\begin{array}{cccc}
\raisebox{-0.1003in}
{\includegraphics[height=0.3494in,width=0.275in]
{resolvecorss.eps}} & 0 & 0 & 0 \\
0 &
\raisebox{-0.1003in}
{\includegraphics[height=0.3494in,width=0.2612in]
{lower-upper-resolvecorss.eps}}-t^{-1}q%
\raisebox{-0.1003in}
{\includegraphics[height=0.3598in,width=0.2569in]
{right-left-vertical-resolvecorss.eps}} & -%
\raisebox{-0.1003in}
{\includegraphics[height=0.3312in,width=0.2534in]
{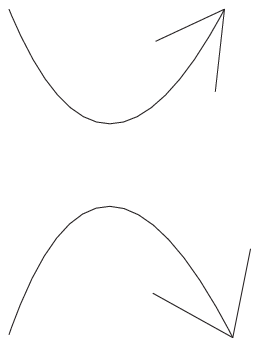}} & 0 \\
&  &  &  \\
0 & -%
\raisebox{-0.1003in}
{\includegraphics[height=0.3329in,width=0.2421in]
{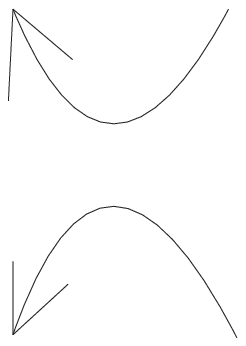}} &
\raisebox{-0.1003in}
{\includegraphics[height=0.3503in,width=0.2499in]
{upper-lower-resolvecorss.eps}}-tq^{-1}%
\raisebox{-0.1003in}
{\includegraphics[height=0.3312in,width=0.2569in]
{left-right-vertical-resolvecorss.eps}} & 0 \\
0 & 0 & 0 &
\raisebox{-0.1003in}
{\includegraphics[height=0.3485in,width=0.2352in]
{upper-uper-resolvecorss.eps}}%
\end{array}%
\right) $

Comparing the coefficients of these diagrams for values of $a,b,c$ and $d$
given in the theorem, we have the desired identity of $\psi _{q;t}$.

Framing moves can be checked similarly. For example, for one of the
nontrivial components of the image of the left move in (\ref{Kauffman Kink})
we have:

$%
\raisebox{-0.1003in}
{\includegraphics[height=0.3217in,width=0.3027in]
{positivecross.eps}}_{%
\raisebox{-0.1203in}
{\includegraphics[height=0.1513in,width=0.3442in]
{rightlower.eps}}}^{%
\raisebox{0.0803in}
{\includegraphics[height=0.1565in,width=0.2681in]
{lefttop.eps}}}$ $t^{-1}q+(a$ $%
\raisebox{-0.1003in}
{\includegraphics[height=0.3494in,width=0.2612in]
{lower-upper-resolvecorss.eps}}+b$ $%
\raisebox{-0.1003in}
{\includegraphics[height=0.3598in,width=0.2569in]
{right-left-vertical-resolvecorss.eps}})tq^{-1}%
\begin{array}{c}
\raisebox{0.0803in}
{\includegraphics[height=0.1565in,width=0.2681in]
{righttop.eps}} \\
\raisebox{-0.1203in}
{\includegraphics[height=0.1513in,width=0.3442in]
{leftlower.eps}}%
\end{array}%
=%
\raisebox{-0.2006in}
{\includegraphics[height=0.5336in,width=0.3511in]
{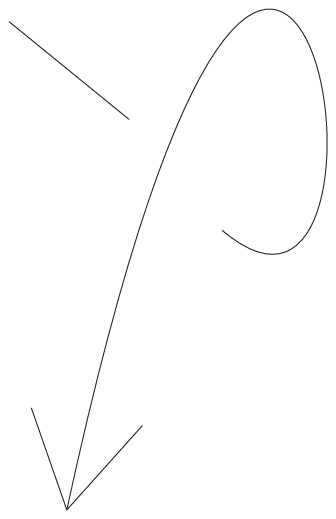}}$ $t^{-1}q+a$ $%
\raisebox{-0.2006in}
{\includegraphics[height=0.5007in,width=0.6737in]
{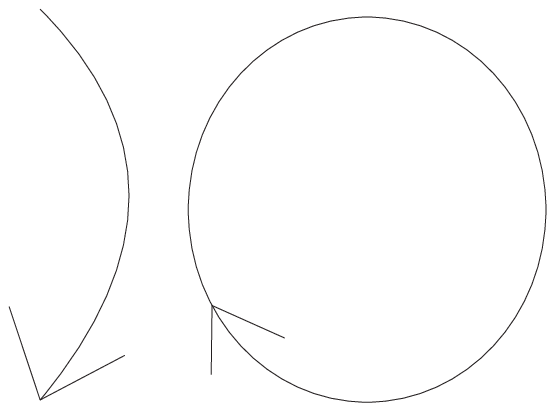}}$ $tq^{-1}+b$ $%
\raisebox{-0.2006in}
{\includegraphics[height=0.5587in,width=0.1894in]
{resolvekink.eps}}$ $tq^{-1}$

$=%
\raisebox{-0.2006in}
{\includegraphics[height=0.5587in,width=0.1894in]
{resolvekink.eps}}$ $q+%
\raisebox{-0.2006in}
{\includegraphics[height=0.5587in,width=0.1894in]
{resolvekink.eps}}$ $a\frac{t-t^{-1}}{q-q^{-1}}tq^{-1}+b$ $%
\raisebox{-0.2006in}
{\includegraphics[height=0.5587in,width=0.1894in]
{resolvekink.eps}}$ $tq^{-1}=%
\raisebox{-0.2006in}
{\includegraphics[height=0.5587in,width=0.1894in]
{resolvekink.eps}}$ $%
[q+(t-t^{-1})tq^{-1}-(q-q^{-1})t^{-1}qtq^{-1}]=t^{2}q^{-1}$ $%
\raisebox{-0.2006in}
{\includegraphics[height=0.5587in,width=0.1894in]
{resolvekink.eps}}$

$=s$ $%
\raisebox{-0.2006in}
{\includegraphics[height=0.5587in,width=0.1894in]
{resolvekink.eps}}$

and

$%
\raisebox{-0.0796in}
{\includegraphics[height=0.2032in,width=0.1929in]
{upper-positivecross.eps}}_{%
\raisebox{-0.1203in}
{\includegraphics[height=0.1513in,width=0.3442in]
{leftlower.eps}}}^{%
\raisebox{0.0803in}
{\includegraphics[height=0.1565in,width=0.2681in]
{righttop.eps}}}$ $tq^{-1}=t$ $%
\raisebox{-0.2214in}
{\includegraphics[height=0.6244in,width=0.2084in]
{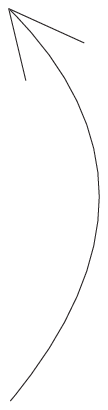}}$ $tq^{-1}=s$ $%
\raisebox{-0.2214in}
{\includegraphics[height=0.6244in,width=0.2084in]
{resolvekinkreverse.eps}}$

It is easy to check that other relations hold as well, in particular we have

$\psi _{q;t}\left(
\raisebox{-0.2006in}
{\includegraphics[height=0.4955in,width=0.3269in] {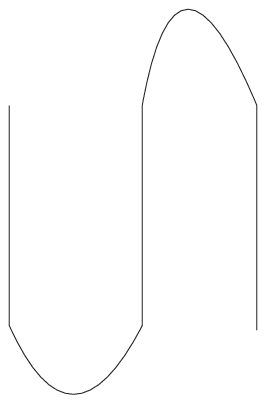}}\right) $ $%
=\psi _{q;t}\left(
\raisebox{-0.2006in}
{\includegraphics[trim=0.051881in 0.000000in 0.051881in
0.000000in, height=0.4601in,width=0.1193in]
{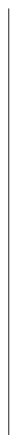}}\right) $, $\psi _{q;t}\left(
\raisebox{-0.1003in}
{\includegraphics[height=0.3226in,width=0.3027in]
{Knegativecross.eps}}\right) =\psi _{q;t}\left(
\raisebox{-0.1003in}
{\includegraphics[height=0.3226in,width=0.5027in]
{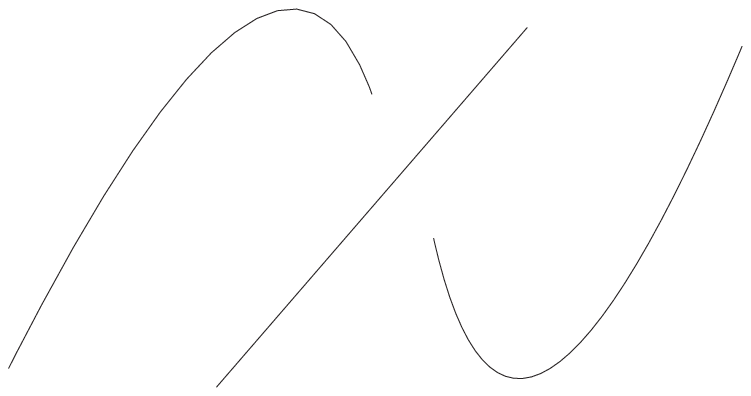}}\right) $

To extend the map $\psi _{q;t}$ from elementary diagrams to any tangle, one
should check relations between elementary diagrams. This is completely
parallel to \cite{T}. For example, here is the proof of invertibility of the
braiding

$\psi _{q;t}\left(
\begin{array}{c}
\raisebox{-0.1003in}
{\includegraphics[height=0.3217in,width=0.3027in]
{Kpositivecross.eps}} \\
\raisebox{-0.1003in}
{\includegraphics[height=0.3217in,width=0.3027in]
{Knegativecross.eps}}%
\end{array}%
\right) =\left(
\begin{array}{cccc}
\begin{array}{c}
\raisebox{-0.1003in}
{\includegraphics[height=0.3217in,width=0.3027in]
{positivecross.eps}} \\
\raisebox{-0.1003in}
{\includegraphics[height=0.3217in,width=0.3027in]
{negativecross.eps}}%
\end{array}
& 0 & 0 & 0 \\
0 &
\begin{array}{c}
\raisebox{-0.1003in}
{\includegraphics[height=0.3217in,width=0.3027in]
{right-negativecross.eps}} \\
\raisebox{-0.1003in}
{\includegraphics[height=0.3217in,width=0.3027in]
{left-positivecross.eps}}%
\end{array}
& \left( a%
\begin{array}{c}
\raisebox{-0.1003in}{\includegraphics[height=0.3494in,width=0.2612in]
{lower-upper-resolvecorss.eps}} \\
\raisebox{-0.1003in}{\includegraphics[height=0.3217in,width=0.3027in]
{right-positivecross.eps}}%
\end{array}%
+b%
\begin{array}{c}
\raisebox{-0.1003in}
{\includegraphics[height=0.3598in,width=0.2569in]
{right-left-vertical-resolvecorss.eps}} \\
\raisebox{-0.1003in}
{\includegraphics[height=0.3217in,width=0.3027in]
{right-positivecross.eps}}%
\end{array}%
+c%
\begin{array}{c}
\raisebox{-0.1003in}
{\includegraphics[height=0.3217in,width=0.3027in]
{right-negativecross.eps}} \\
\raisebox{-0.1003in}
{\includegraphics[height=0.3494in,width=0.2499in]
{upper-lower-resolvecorss.eps}}%
\end{array}%
+d%
\begin{array}{c}
\raisebox{-0.1003in}
{\includegraphics[height=0.3217in,width=0.3027in]
{right-negativecross.eps}} \\
\raisebox{-0.1003in}
{\includegraphics[height=0.3312in,width=0.2569in]
{left-right-vertical-resolvecorss.eps}}%
\end{array}%
\right) & 0 \\
&  &  &  \\
0 & 0 &
\begin{array}{c}
\raisebox{-0.1003in}
{\includegraphics[height=0.3217in,width=0.3027in]
{left-negativecross.eps}} \\
\raisebox{-0.1003in}
{\includegraphics[height=0.3217in,width=0.3027in]
{right-positivecross.eps}}%
\end{array}
& 0 \\
0 & 0 & 0 &
\begin{array}{c}
\raisebox{-0.1003in}
{\includegraphics[height=0.3217in,width=0.3027in]
{upper-positivecross.eps}} \\
\raisebox{-0.1003in}
{\includegraphics[height=0.3217in,width=0.3027in]
{upper-negativecross.eps}}%
\end{array}%
\end{array}%
\right) $

$=\left(
\begin{array}{cccc}
\raisebox{-0.1003in}
{\includegraphics[height=0.3494in,width=0.275in]
{resolvecorss.eps}} & 0 & 0 & 0 \\
0 &
\raisebox{-0.1003in}
{\includegraphics[height=0.3494in,width=0.2612in]
{lower-upper-resolvecorss.eps}} & \left( a\text{ }%
\raisebox{-0.1003in}
{\includegraphics[height=0.3217in,width=0.3027in]
{right-positivecross.eps}}+bt\text{ }%
\raisebox{-0.1003in}
{\includegraphics[height=0.3312in,width=0.2534in]
{right-right-vertical-resolvecorss.eps}}+c\text{ }%
\raisebox{-0.1003in}
{\includegraphics[height=0.3217in,width=0.3027in]
{right-negativecross.eps}}+dt^{-1}\text{ }%
\raisebox{-0.1003in}
{\includegraphics[height=0.3312in,width=0.2534in]
{right-right-vertical-resolvecorss.eps}}\right) & 0 \\
&  &  &  \\
0 & 0 &
\raisebox{-0.1003in}
{\includegraphics[height=0.3494in,width=0.2499in]
{upper-lower-resolvecorss.eps}} & 0 \\
0 & 0 & 0 &
\raisebox{-0.1003in}
{\includegraphics[height=0.3485in,width=0.2352in]
{upper-uper-resolvecorss.eps}}%
\end{array}%
\right) $

$=\left(
\begin{array}{cccc}
\raisebox{-0.1003in}
{\includegraphics[height=0.3494in,width=0.275in]
{resolvecorss.eps}} & 0 & 0 & 0 \\
0 &
\raisebox{-0.1003in}
{\includegraphics[height=0.3494in,width=0.2612in]
{lower-upper-resolvecorss.eps}} & [a(q-q^{-1})+(bt+dt^{-1})]\text{ }%
\raisebox{-0.1003in}
{\includegraphics[height=0.3312in,width=0.2534in]
{right-right-vertical-resolvecorss.eps}} & 0 \\
&  &  &  \\
0 & 0 &
\raisebox{-0.1003in}
{\includegraphics[height=0.3494in,width=0.2499in]
{upper-lower-resolvecorss.eps}} & 0 \\
0 & 0 & 0 &
\raisebox{-0.1003in}
{\includegraphics[height=0.3485in,width=0.2352in]
{upper-uper-resolvecorss.eps}}%
\end{array}%
\right) $

$=\left(
\begin{array}{cccc}
\raisebox{-0.2006in}
{\includegraphics[height=0.5085in,width=0.3995in]
{resolvecorss.eps}} & 0 & 0 & 0 \\
0 &
\raisebox{-0.2006in}
{\includegraphics[height=0.5076in,width=0.3779in]
{lower-upper-resolvecorss.eps}} & 0 & 0 \\
0 & 0 &
\raisebox{-0.2006in}
{\includegraphics[height=0.5076in,width=0.3615in]
{upper-lower-resolvecorss.eps}} & 0 \\
0 & 0 & 0 &
\raisebox{-0.2006in}
{\includegraphics[height=0.5085in,width=0.339in]
{upper-uper-resolvecorss.eps}}%
\end{array}%
\right) =\psi _{q;t}\left(
\raisebox{-0.2006in}
{\includegraphics[height=0.5085in,width=0.339in]
{Kresolvecorss.eps}}\right) $

Other relations can be checked similarly.
\end{proof}

When $s=\pm q^{2n-1}$ and $t=q^{n}$, the functor $\psi _{q;t}$ restricts to
the functor between corresponding quotient categories and becomes the
restriction functor from $U_{q}(so_{2n})$ or $U_{q}(sp_{2n})$ modules to $%
U_{q}(sl_{n})$ modules.

\section{The Recursion corresponding to $so_{2n}\supset so_{2k}\times
sl_{n-k}$, $so_{2n+1}\supset so_{2k+1}\times sl_{n-k}$ and $sp_{2n}\supset
sp_{2k}\times sl_{n-k}$}

\label{sec5}

Let $g_{n}$ be either $so_{2n+1}$, $so_{2n}$ or $sp_{2n}$. In this section
we will construct the functor between skein categories corresponding to
embeddings $g_{n}\supset g_{k}\times sl_{n-k}$ of Lie algebras.

The skein category $\mathcal{K}_{q;s}$ corresponds to $g_{k}$ and to
Kauffman invariants we described in the previous section. Now we will define
the skein category $\mathcal{HK}_{q;s,t}$ corresponding to products of
HOMFLY and Kauffman invariants and will describe the functor

\begin{equation}
\chi _{q;s,t}:\mathcal{K}_{q;st^{2}}\rightarrow \mathcal{HK}_{q;s,t}
\end{equation}

First, consider the category $\underline{Tan}^{\prime \prime }$ of framed
tangles with some components being oriented and some not.

Objects of $\underline{Tan}^{\prime \prime }$ are sequences of $%
\{\varepsilon _{1},...,\varepsilon _{n}\}$ where $\varepsilon _{i}=\pm ,0$.
Morphisms between $\{\varepsilon \}$ and $\{\sigma \}$ are framed tangles
with the blackboard framing near ends. Some components of these tangles are
oriented, some not. The orientation of components agrees with the objects as
it is shown on the following figure.

$\raisebox{0in}
{\includegraphics[height=2.28in,width=2.08in]
{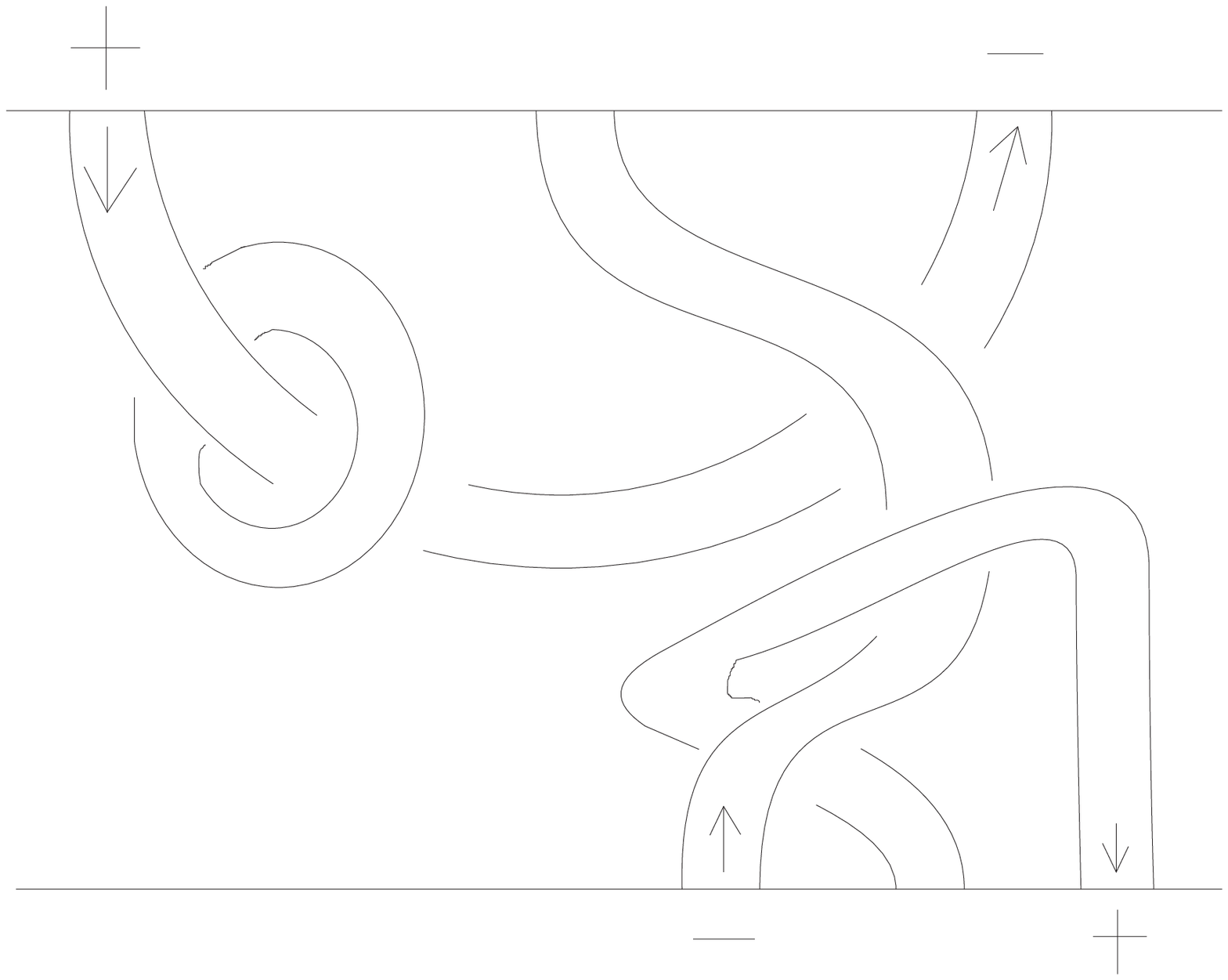}}$

Objects of the category $\mathcal{HK}_{q;s,t}$ are direct sums of objects in
$\underline{Tan}^{\prime \prime }$. Morphisms between $\{\varepsilon \}$ and
$\{\sigma \}$ are quotient spaces of linear combination of morphisms in $%
\underline{Tan}^{\prime }$ modulo defining relations as in $\mathcal{K}%
_{q;s} $ for non-oriented components, as in $\mathcal{H}_{q;t}$ for oriented
components and with the following relations between oriented and
non-oriented components

\bigskip

$%
\raisebox{-0.2006in}
{\includegraphics[height=0.4687in,width=0.4393in]
{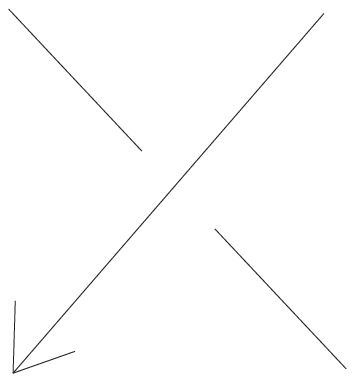}}$ $=$ $%
\raisebox{-0.2006in}
{\includegraphics[height=0.4687in,width=0.4393in]
{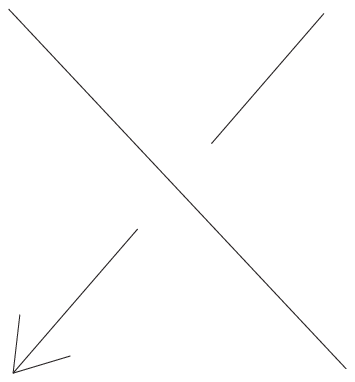}}$ $=$ $%
\raisebox{-0.2006in}
{\includegraphics[height=0.4687in,width=0.4393in]
{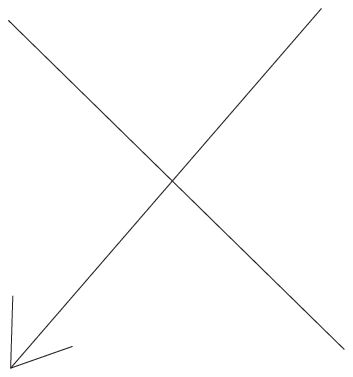}}$ , $%
\raisebox{-0.2006in}
{\includegraphics[height=0.4687in,width=0.4393in]
{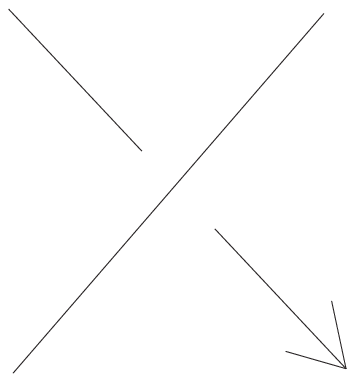}}$ $=$ $%
\raisebox{-0.2006in}
{\includegraphics[height=0.4687in,width=0.4393in]
{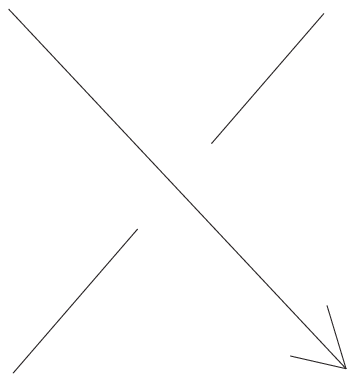}}$ $=$ $%
\raisebox{-0.2006in}
{\includegraphics[height=0.4687in,width=0.4393in]
{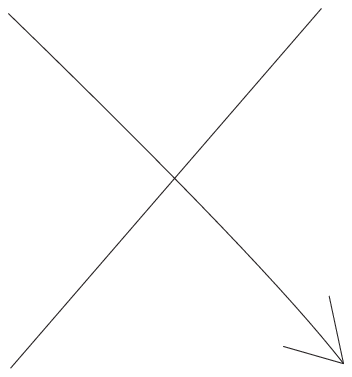}}$,

$\bigskip $

$%
\raisebox{-0.2006in}
{\includegraphics[height=0.4687in,width=0.4393in]
{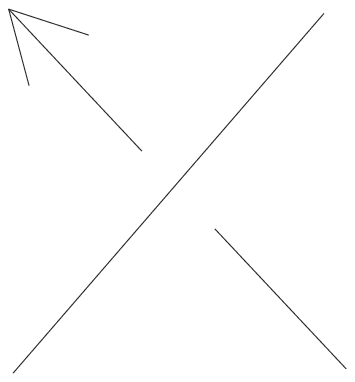}}$ $=$ $%
\raisebox{-0.2006in}
{\includegraphics[height=0.4687in,width=0.4393in]
{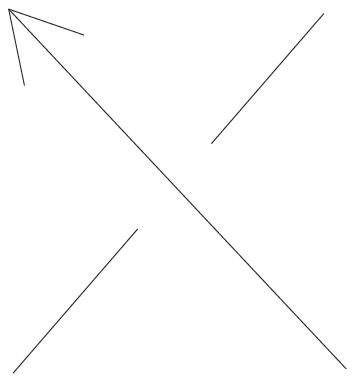}}$ $=$ $%
\raisebox{-0.2006in}
{\includegraphics[height=0.4687in,width=0.4393in]
{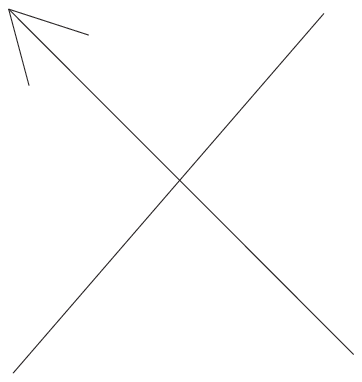}}$ , $%
\raisebox{-0.2006in}
{\includegraphics[height=0.4687in,width=0.4393in]
{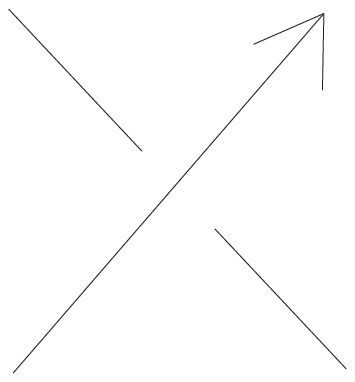}}$ $=$ $%
\raisebox{-0.2006in}
{\includegraphics[height=0.4687in,width=0.4393in]
{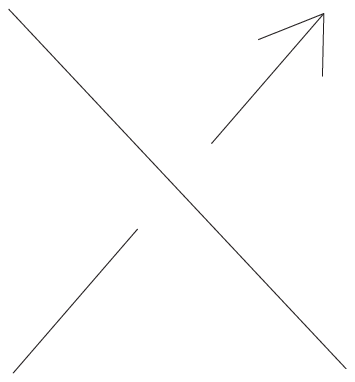}}$ $=$ $%
\raisebox{-0.2006in}
{\includegraphics[height=0.4687in,width=0.4393in]
{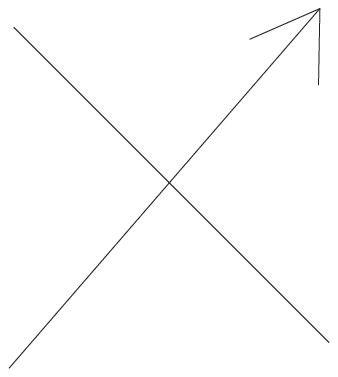}}$.

\bigskip

Now we will construct a covariant functor $\chi _{q;s,t}:\mathcal{K}%
_{q;st^{2}}\rightarrow \mathcal{HK}_{q;s,t}$. Define it on objects as $\chi
_{q;s,t}(n)=\underset{\varepsilon _{1},...,\varepsilon _{n}=\pm ,0}{\oplus }%
(\varepsilon _{1},...,\varepsilon _{n})$. To define the functor $\chi
_{q;s,t}$ on morphisms it is convenient to introduce a formal variable $a$
such that $s=a^{2}q^{-1}$ and $p=q-q^{-1}$.

Then we define $\chi _{q;s,t}$ on elementary morphism as follows.

\bigskip

$\chi _{q;s,t}\left(
\raisebox{-0.0796in}
{\includegraphics[height=0.2491in,width=0.2335in]
{Kpositivecross.eps}}\right) $

\bigskip

$=\left(
\begin{array}{ccccccccc}
\raisebox{-0.0796in}
{\includegraphics[height=0.2032in,width=0.1929in]
{positivecross.eps}} & 0 & 0 & 0 & 0 & 0 & 0 & 0 & 0 \\
0 & p%
\raisebox{-0.0796in}
{\includegraphics[height=0.2205in,width=0.1678in]
{lower-upper-resolvecorss.eps}}-ps^{-1}t^{-1}%
\raisebox{-0.0796in}
{\includegraphics[height=0.2266in,width=0.1652in]
{right-left-vertical-resolvecorss.eps}} &
\raisebox{-0.0796in}
{\includegraphics[height=0.2032in,width=0.1929in]
{right-negativecross.eps}} & 0 & 0 & 0 & 0 & 0 & -pa^{-1}%
\raisebox{-0.0796in}
{\includegraphics[height=0.2266in,width=0.1652in]
{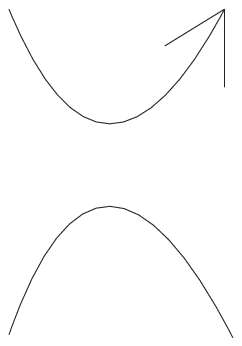}} \\
0 &
\raisebox{-0.0796in}
{\includegraphics[height=0.2032in,width=0.1929in]
{left-negativecross.eps}} & 0 & 0 & 0 & 0 & 0 & 0 & 0 \\
0 & 0 & 0 &
\raisebox{-0.0796in}
{\includegraphics[height=0.2032in,width=0.1929in]
{upper-positivecross.eps}} & 0 & 0 & 0 & 0 & 0 \\
0 & 0 & 0 & 0 & p%
\raisebox{-0.0796in}
{\includegraphics[height=0.2205in,width=0.1678in]
{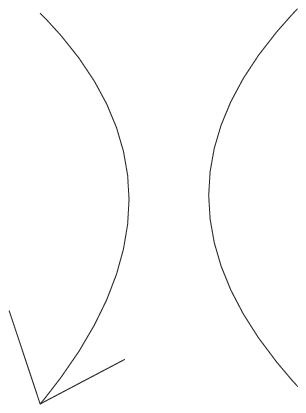}} & 0 &
\raisebox{-0.0796in}
{\includegraphics[height=0.2032in,width=0.1929in]
{rightlower-cross.eps}} & 0 & 0 \\
0 & 0 & 0 & 0 & 0 & p%
\raisebox{-0.0796in}
{\includegraphics[height=0.2205in,width=0.1531in]
{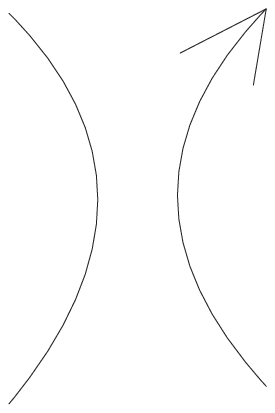}} & 0 &
\raisebox{-0.0796in}
{\includegraphics[height=0.2032in,width=0.1929in]
{rightupper-cross.eps}} & 0 \\
0 & 0 & 0 & 0 &
\raisebox{-0.0796in}
{\includegraphics[height=0.2032in,width=0.1929in]
{leftlower-cross.eps}} & 0 & 0 & 0 & 0 \\
0 & 0 & 0 & 0 & 0 &
\raisebox{-0.0796in}
{\includegraphics[height=0.2032in,width=0.1929in]
{leftupper-cross.eps}} & 0 & 0 & 0 \\
0 & -pt^{-1}a^{-1}q%
\raisebox{-0.0796in}
{\includegraphics[height=0.2266in,width=0.1652in]
{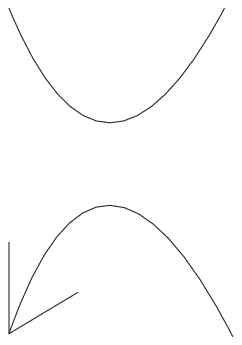}} & 0 & 0 & 0 & 0 & 0 & 0 &
\raisebox{-0.0796in}
{\includegraphics[height=0.2032in,width=0.1929in]
{Kpositivecross.eps}}%
\end{array}%
\right) $

\bigskip

$\chi _{q;s,t}\left(
\raisebox{-0.0796in}
{\includegraphics[height=0.2491in,width=0.2335in]
{Knegativecross.eps}}\right) $

\bigskip

$=\left(
\begin{array}{ccccccccc}
\raisebox{-0.0796in}
{\includegraphics[height=0.2032in,width=0.1929in]
{negativecross.eps}} & 0 & 0 & 0 & 0 & 0 & 0 & 0 & 0 \\
0 & 0 &
\raisebox{-0.0796in}
{\includegraphics[height=0.2032in,width=0.1929in]
{right-positivecross.eps}} & 0 & 0 & 0 & 0 & 0 & 0 \\
0 &
\raisebox{-0.0796in}
{\includegraphics[height=0.2032in,width=0.1929in]
{left-positivecross.eps}} & -p%
\raisebox{-0.0796in}
{\includegraphics[height=0.2197in,width=0.1609in]
{upper-lower-resolvecorss.eps}}+pst%
\raisebox{-0.0796in}
{\includegraphics[height=0.2266in,width=0.1781in]
{left-right-vertical-resolvecorss.eps}} & 0 & 0 & 0 & 0 & 0 & ptaq^{-1}%
\raisebox{-0.0796in}
{\includegraphics[height=0.2266in,width=0.1678in]
{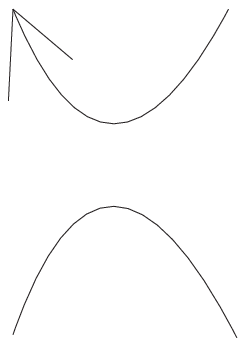}} \\
0 & 0 & 0 &
\raisebox{-0.0796in}
{\includegraphics[height=0.2032in,width=0.1929in]
{upper-negativecross.eps}} & 0 & 0 & 0 & 0 & 0 \\
0 & 0 & 0 & 0 & 0 & 0 &
\raisebox{-0.0796in}
{\includegraphics[height=0.2032in,width=0.1929in]
{rightlower-cross.eps}} & 0 & 0 \\
0 & 0 & 0 & 0 & 0 & 0 & 0 &
\raisebox{-0.0796in}
{\includegraphics[height=0.2032in,width=0.1929in]
{rightupper-cross.eps}} & 0 \\
0 & 0 & 0 & 0 &
\raisebox{-0.0796in}
{\includegraphics[height=0.2032in,width=0.1929in]
{leftlower-cross.eps}} & 0 & -p%
\raisebox{-0.0796in}
{\includegraphics[height=0.2197in,width=0.1609in]
{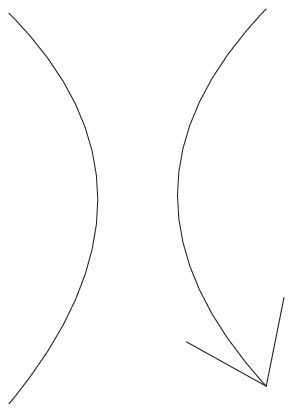}} & 0 & 0 \\
0 & 0 & 0 & 0 & 0 &
\raisebox{-0.0796in}
{\includegraphics[height=0.2032in,width=0.1929in]
{leftupper-cross.eps}} & 0 & -p%
\raisebox{-0.0796in}
{\includegraphics[height=0.2205in,width=0.1531in]
{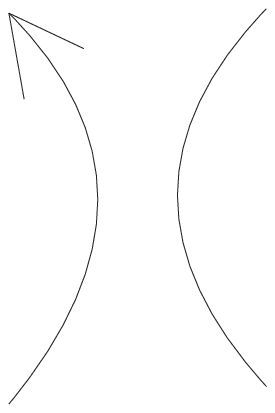}} & 0 \\
0 & 0 & pa%
\raisebox{-0.0796in}
{\includegraphics[height=0.2266in,width=0.1756in]
{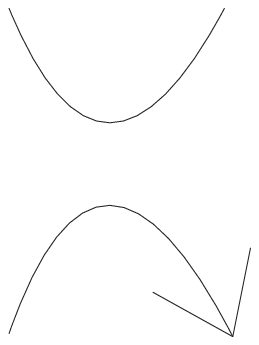}} & 0 & 0 & 0 & 0 & 0 &
\raisebox{-0.0796in}
{\includegraphics[height=0.2032in,width=0.1929in]
{Knegativecross.eps}}%
\end{array}%
\right) $

\bigskip

$\chi _{q;s,t}\left(
\raisebox{-0.0796in}
{\includegraphics[height=0.2681in,width=0.1842in]
{Kresolvecorss.eps}}\right) =\left(
\begin{array}{ccccccccc}
\raisebox{-0.0796in}
{\includegraphics[height=0.2681in,width=0.2136in]
{resolvecorss.eps}} & 0 & 0 & 0 & 0 & 0 & 0 & 0 & 0 \\
0 &
\raisebox{-0.0796in}
{\includegraphics[height=0.2681in,width=0.2032in]
{lower-upper-resolvecorss.eps}} & 0 & 0 & 0 & 0 & 0 & 0 & 0 \\
0 & 0 &
\raisebox{-0.0796in}
{\includegraphics[height=0.2681in,width=0.1946in]
{upper-lower-resolvecorss.eps}} & 0 & 0 & 0 & 0 & 0 & 0 \\
0 & 0 & 0 &
\raisebox{-0.0796in}
{\includegraphics[height=0.2681in,width=0.1842in]
{upper-uper-resolvecorss.eps}} & 0 & 0 & 0 & 0 & 0 \\
0 & 0 & 0 & 0 &
\raisebox{-0.0796in}
{\includegraphics[height=0.2681in,width=0.2032in]
{lower-no-resolvecorss.eps}} & 0 & 0 & 0 & 0 \\
0 & 0 & 0 & 0 & 0 &
\raisebox{-0.0796in}
{\includegraphics[height=0.2681in,width=0.1842in]
{no-upper-resolvecorss.eps}} & 0 & 0 & 0 \\
0 & 0 & 0 & 0 & 0 & 0 &
\raisebox{-0.0796in}
{\includegraphics[height=0.2681in,width=0.1946in]
{no-lower-resolvecorss.eps}} & 0 & 0 \\
0 & 0 & 0 & 0 & 0 & 0 & 0 &
\raisebox{-0.0796in}
{\includegraphics[height=0.2681in,width=0.1842in]
{upper-no-resolvecorss.eps}} & 0 \\
0 & 0 & 0 & 0 & 0 & 0 & 0 & 0 &
\raisebox{-0.0796in}
{\includegraphics[height=0.2681in,width=0.1842in]
{Kresolvecorss.eps}}%
\end{array}%
\right) $

\bigskip

$\chi _{q;s,t}\left(
\raisebox{-0.0502in}
{\includegraphics[height=0.1513in,width=0.3312in]
{lower.eps}}\right) =\left(
\begin{array}{c}
0 \\
t^{-1}a^{-1}\text{ }%
\raisebox{-0.0502in}
{\includegraphics[height=0.1513in,width=0.3442in]
{rightlower.eps}} \\
aq^{-1}\text{ }%
\raisebox{-0.0502in}
{\includegraphics[height=0.1513in,width=0.3312in]
{leftlower.eps}} \\
0 \\
0 \\
0 \\
0 \\
0 \\
t^{-1}%
\raisebox{-0.0502in}
{\includegraphics[height=0.1513in,width=0.3312in]
{lower.eps}}%
\end{array}%
\right) $, $\chi _{q;s,t}\left(
\raisebox{-0.0502in}
{\includegraphics[height=0.1574in,width=0.2646in]
{top.eps}}\right) =\left(
\begin{array}{ccccccccc}
0 & a^{-1}q\text{ }%
\raisebox{-0.0502in}
{\includegraphics[height=0.1565in,width=0.2681in]{lefttop.eps}} & ta\text{ }%
\raisebox{-0.0502in}
{\includegraphics[height=0.1574in,width=0.2646in]{righttop.eps}} & 0 & 0 & 0
& 0 & 0 & t\text{ }%
\raisebox{-0.0502in}
{\includegraphics[height=0.1574in,width=0.2646in]
{top.eps}}%
\end{array}%
\right) $

\bigskip

\begin{theorem}
\label{Thm Kauffman 2} The map $\chi _{q;s,t}$ extends uniquely to a
covariant functor from $\mathcal{K}_{q;st^{2}}$ to $\mathcal{HK}_{q;s,t}$.
\end{theorem}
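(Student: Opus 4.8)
The plan is to repeat, in the richer setting of $\mathcal{HK}_{q;s,t}$, the argument used for Theorem~\ref{Thm Kauffman 1}. Since $\mathcal{K}_{q;st^2}$ is generated as a braided monoidal category by the two crossings, the resolved crossing, and the cap and cup, subject to the Kauffman skein relation (\ref{Kauffman}), the framing relations (\ref{Kauffman Kink}), and the Reidemeister and rigidity relations among these generators (as in \cite{T} and \cite{RT}), the assignment $\chi_{q;s,t}$ on elementary morphisms extends to a functor precisely when it is compatible with all of these relations, and the extension is then automatically unique because every morphism is a composite of tensor products of the generators.

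First I would check the Kauffman skein relation. Applying $\chi_{q;s,t}$ to the positive and negative crossings and subtracting, I would compare the resulting $9\times 9$ matrix, entry by entry, with $(q-q^{-1})$ times the difference of the images of the two smoothings on the right of (\ref{Kauffman}). The fully oriented blocks, in which both strands carry labels in $\{+,-\}$, reduce to the HOMFLY computation of Sections~\ref{sec2} and~\ref{sec3}, while the purely non-oriented block, where both strands carry the label $0$, reproduces the Kauffman identity of Theorem~\ref{Thm Kauffman 1}. The genuinely new content lies in the mixed blocks, where one strand is oriented and the other carries the label $0$, together with the off-diagonal corner entries (with coefficients $\pm pa^{-1}$, $ptaq^{-1}$, $-pt^{-1}a^{-1}q$ and $pa$) that couple the pure Kauffman sector to the oriented sectors. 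Here the substitutions $s=a^2q^{-1}$ and $p=q-q^{-1}$ must make the diagonal resolved-crossing terms and the off-diagonal turn-back terms cancel and combine correctly.

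Next I would verify the framing relations (\ref{Kauffman Kink}): closing a crossing with the cap above and the cup below must reproduce multiplication by $st^2=a^2q^{-1}t^2$ (and its inverse for the negative kink). As in the proof of Theorem~\ref{Thm Kauffman 1}, this is done one summand at a time, using the oriented unknot value $(t-t^{-1})/(q-q^{-1})$ and the corresponding non-oriented loop value, and tracking the scalar factors $t^{-1}a^{-1}$, $aq^{-1}$, $a^{-1}q$, $ta$ and $t$ attached by $\chi_{q;s,t}$ to the cap and cup; these combine with the HOMFLY twist and the Kauffman twist in each sector to produce the single factor $st^2$. Finally I would check the relations among the generators themselves --- invertibility of the braiding (Reidemeister~II), the Yang--Baxter relation (Reidemeister~III), and the zigzag straightening identities for the cap and cup --- together with the oriented-versus-non-oriented crossing equalities imposed in $\mathcal{HK}_{q;s,t}$, exactly parallel to the sample computations already displayed for $\psi_{q;t}$.

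The hard part will be the mixed oriented/non-oriented sectors, which have no counterpart in any of the earlier theorems and which encode precisely the decomposition underlying $g_n\supset g_k\times sl_{n-k}$. For these one cannot borrow either the HOMFLY or the pure Kauffman checks: the Yang--Baxter relation must be confirmed in every block in which the three strands carry a mixture of $+$, $-$ and $0$ labels, and the coupling coefficients involving the formal variable $a$ are forced by these requirements. Verifying that one and the same choice of these coefficients simultaneously satisfies the skein relation, the framing relation, and the Yang--Baxter relation across all mixed blocks is the crux of the argument.
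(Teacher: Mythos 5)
Your plan coincides with the paper's own argument: the proof there likewise reduces everything to checking the Kauffman skein relation, the framing (kink) relations, and the invertibility of the braiding on the elementary generators, block by block in the $9\times 9$ matrices, with the purely oriented and purely non-oriented blocks recycling the earlier HOMFLY and Kauffman checks and the genuinely new cancellations occurring in the mixed and corner entries via the substitution $s=a^{2}q^{-1}$, exactly as you predict. The only difference is one of completeness rather than of method: the paper actually carries out the corner-entry cancellations (the entries it labels $\ast$, $\ast\ast$, $\ast\ast\ast$) and the mixed framing computations, which is the step your outline defers.
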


\begin{proof}
To prove this we should check relations. Composing elementary diagrams we
have:

\bigskip

$\chi _{q;s,t}\left(
\begin{array}{c}
\raisebox{-0.0796in}
{\includegraphics[height=0.2491in,width=0.2335in]
{Kpositivecross.eps}} \\
\raisebox{-0.0796in}
{\includegraphics[height=0.2491in,width=0.2335in]
{Knegativecross.eps}}%
\end{array}%
\right) $

\bigskip

$=\left(
\begin{array}{ccccccccc}
\begin{array}{c}
\raisebox{-0.0796in}
{\includegraphics[height=0.2032in,width=0.1929in]
{positivecross.eps}} \\
\raisebox{-0.0796in}
{\includegraphics[height=0.2032in,width=0.1929in]
{negativecross.eps}}%
\end{array}
& 0 & 0 & 0 & 0 & 0 & 0 & 0 & 0 \\
0 &
\begin{array}{c}
\raisebox{-0.0796in}
{\includegraphics[height=0.2032in,width=0.1929in]
{right-negativecross.eps}} \\
\raisebox{-0.0796in}
{\includegraphics[height=0.2491in,width=0.1929in]
{left-positivecross.eps}}%
\end{array}
& \ast & 0 & 0 & 0 & 0 & 0 & \ast \ast \\
0 & 0 &
\begin{array}{c}
\raisebox{-0.0796in}
{\includegraphics[height=0.2032in,width=0.1929in]
{left-negativecross.eps}} \\
\raisebox{-0.0796in}
{\includegraphics[height=0.2032in,width=0.1929in]
{right-positivecross.eps}}%
\end{array}
& 0 & 0 & 0 & 0 & 0 & 0 \\
0 & 0 & 0 &
\begin{array}{c}
\raisebox{-0.0796in}
{\includegraphics[height=0.2032in,width=0.1929in]
{upper-positivecross.eps}} \\
\raisebox{-0.0796in}
{\includegraphics[height=0.2032in,width=0.1929in]
{upper-negativecross.eps}}%
\end{array}
& 0 & 0 & 0 & 0 & 0 \\
0 & 0 & 0 & 0 &
\begin{array}{c}
\raisebox{-0.0796in}
{\includegraphics[height=0.2032in,width=0.1929in]
{rightlower-positivecross.eps}} \\
\raisebox{-0.0796in}
{\includegraphics[height=0.2032in,width=0.1929in]
{leftlower-negativecross.eps}}%
\end{array}
& 0 & p%
\begin{array}{c}
\raisebox{-0.0796in}
{\includegraphics[height=0.2205in,width=0.1678in]
{lower-no-resolvecorss.eps}} \\
\raisebox{-0.0796in}
{\includegraphics[height=0.2032in,width=0.1929in]
{rightlower-negativecross.eps}}%
\end{array}%
-p%
\begin{array}{c}
\raisebox{-0.0796in}
{\includegraphics[height=0.2032in,width=0.1929in]
{rightlower-positivecross.eps}} \\
\raisebox{-0.0796in}
{\includegraphics[height=0.2197in,width=0.1609in]
{no-lower-resolvecorss.eps}}%
\end{array}
& 0 & 0 \\
0 & 0 & 0 & 0 & 0 &
\begin{array}{c}
\raisebox{-0.0796in}
{\includegraphics[height=0.2032in,width=0.1929in]
{rightupper-positivecross.eps}} \\
\raisebox{-0.0796in}
{\includegraphics[height=0.2032in,width=0.1929in]
{leftupper-negativecross.eps}}%
\end{array}
& 0 & p%
\begin{array}{c}
\raisebox{-0.0796in}
{\includegraphics[height=0.2205in,width=0.1531in]
{no-upper-resolvecorss.eps}} \\
\raisebox{-0.0796in}
{\includegraphics[height=0.2032in,width=0.1929in]
{rightupper-negativecross.eps}}%
\end{array}%
-p%
\begin{array}{c}
\raisebox{-0.0796in}
{\includegraphics[height=0.2032in,width=0.1929in]
{rightupper-positivecross.eps}} \\
\raisebox{-0.0796in}
{\includegraphics[height=0.2205in,width=0.1531in]
{upper-no-resolvecorss.eps}}%
\end{array}
& 0 \\
0 & 0 & 0 & 0 & 0 & 0 &
\begin{array}{c}
\raisebox{-0.0796in}
{\includegraphics[height=0.2032in,width=0.1929in]
{leftlower-positivecross.eps}} \\
\raisebox{-0.0796in}
{\includegraphics[height=0.2032in,width=0.1929in]
{rightlower-negativecross.eps}}%
\end{array}
& 0 & 0 \\
0 & 0 & 0 & 0 & 0 & 0 & 0 &
\begin{array}{c}
\raisebox{-0.0796in}
{\includegraphics[height=0.2032in,width=0.1929in]
{leftupper-positivecross.eps}} \\
\raisebox{-0.0796in}
{\includegraphics[height=0.2032in,width=0.1929in]
{rightupper-negativecross.eps} }%
\end{array}
& 0 \\
0 & 0 & \ast \ast \ast & 0 & 0 & 0 & 0 & 0 &
\begin{array}{c}
\raisebox{-0.0796in}
{\includegraphics[height=0.2032in,width=0.1929in]
{Kpositivecross.eps}} \\
\raisebox{-0.0796in}
{\includegraphics[height=0.2032in,width=0.1929in]
{Knegativecross.eps}}%
\end{array}%
\end{array}%
\right) $

\bigskip

Matrix elements which are denotes by $\ast $, $\ast \ast $ and $\ast \ast
\ast $ are easy to compute:

\bigskip

$\ast =p%
\begin{array}{c}
\raisebox{-0.0796in}
{\includegraphics[height=0.2205in,width=0.1678in]
{lower-upper-resolvecorss.eps}} \\
\raisebox{-0.0796in}
{\includegraphics[height=0.2032in,width=0.1929in]
{right-positivecross.eps}}%
\end{array}%
-ps^{-1}t^{-1}%
\begin{array}{c}
\raisebox{-0.0796in}
{\includegraphics[height=0.2266in,width=0.1652in]
{right-left-vertical-resolvecorss.eps}} \\
\raisebox{-0.0796in}
{\includegraphics[height=0.2032in,width=0.1929in]
{right-positivecross.eps}}%
\end{array}%
-p%
\begin{array}{c}
\raisebox{-0.0796in}
{\includegraphics[height=0.2032in,width=0.1929in]
{right-negativecross.eps}} \\
\raisebox{-0.0796in}
{\includegraphics[height=0.2681in,width=0.1946in]
{upper-lower-resolvecorss.eps}}%
\end{array}%
+pst%
\begin{array}{c}
\raisebox{-0.0796in}
{\includegraphics[height=0.2032in,width=0.1929in]
{right-negativecross.eps}} \\
\raisebox{-0.0796in}
{\includegraphics[height=0.2664in,width=0.2076in]
{left-right-vertical-resolvecorss.eps}}%
\end{array}%
-p^{2}%
\begin{array}{c}
\raisebox{-0.0796in}
{\includegraphics[height=0.2266in,width=0.1652in]
{right-no-vertical-resolvecorss.eps}} \\
\raisebox{-0.0796in}
{\includegraphics[height=0.2266in,width=0.1756in]
{no-right-vertical-resolvecorss.eps}}%
\end{array}%
$

\bigskip

$=p%
\raisebox{-0.0796in}
{\includegraphics[height=0.2032in,width=0.1929in]
{right-positivecross.eps}}-ps^{-1}%
\raisebox{-0.0796in}
{\includegraphics[height=0.2655in,width=0.205in]
{right-right-vertical-resolvecorss.eps}}-p%
\raisebox{-0.0796in}
{\includegraphics[height=0.2032in,width=0.1929in]
{right-negativecross.eps}}+ps%
\raisebox{-0.0796in}
{\includegraphics[height=0.2655in,width=0.205in]
{right-right-vertical-resolvecorss.eps}}-p^{2}%
\raisebox{-0.0796in}
{\includegraphics[height=0.2655in,width=0.205in]
{right-right-vertical-resolvecorss.eps}}(1+\frac{s-s^{-1}}{q-q^{-1}})$

\bigskip

$=p(%
\raisebox{-0.0796in}
{\includegraphics[height=0.2032in,width=0.1929in
] {right-positivecross.eps}}-%
\raisebox{-0.0796in}
{\includegraphics[height=0.2032in,width=0.1929in]
{right-negativecross.eps}})-ps^{-1}%
\raisebox{-0.0796in}
{\includegraphics[height=0.2655in,width=0.205in]
{right-right-vertical-resolvecorss.eps}}+ps%
\raisebox{-0.0796in}
{\includegraphics[height=0.2655in,width=0.205in]
{right-right-vertical-resolvecorss.eps}}-p^{2}%
\raisebox{-0.0796in}
{\includegraphics[height=0.2655in,width=0.205in]
{right-right-vertical-resolvecorss.eps}}(1+\frac{s-s^{-1}}{q-q^{-1}})$

\bigskip

$=p^{2}%
\raisebox{-0.0796in}
{\includegraphics[height=0.2655in,width=0.205in]
{right-right-vertical-resolvecorss.eps}}-ps^{-1}%
\raisebox{-0.0796in}
{\includegraphics[height=0.2655in,width=0.205in]
{right-right-vertical-resolvecorss.eps}}+ps%
\raisebox{-0.0796in}
{\includegraphics[height=0.2655in,width=0.205in]
{right-right-vertical-resolvecorss.eps}}-p^{2}%
\raisebox{-0.0796in}
{\includegraphics[height=0.2655in,width=0.205in]
{right-right-vertical-resolvecorss.eps}}(1+\frac{s-s^{-1}}{q-q^{-1}})$

\bigskip

$=0$

\bigskip

$\ast \ast =ptaq^{-1}%
\begin{array}{c}
\raisebox{-0.0796in}
{\includegraphics[height=0.2032in,width=0.1929in]
{right-negativecross.eps}} \\
\raisebox{-0.0796in}
{\includegraphics[height=0.2266in,width=0.1678in]
{left-no-vertical-resolvecorss.eps}}%
\end{array}%
-pa^{-1}%
\begin{array}{c}
\raisebox{-0.0796in}
{\includegraphics[height=0.2266in,width=0.1652in]
{right-no-vertical-resolvecorss.eps}} \\
\raisebox{-0.0796in}
{\includegraphics[height=0.2032in,width=0.1929in]
{Knegativecross.eps}}%
\end{array}%
=paq^{-1}%
\raisebox{-0.0796in}
{\includegraphics[height=0.2266in,width=0.1652in]
{right-no-vertical-resolvecorss.eps}}-pa^{-1}s%
\raisebox{-0.0796in}
{\includegraphics[height=0.2266in,width=0.1652in]
{right-no-vertical-resolvecorss.eps}}=0$

\bigskip

$\ast \ast \ast =-pt^{-1}a^{-1}q%
\begin{array}{c}
\raisebox{-0.0796in}
{\includegraphics[height=0.2266in,width=0.1652in]
{no-left-vertical-resolvecorss.eps}} \\
\raisebox{-0.0796in}
{\includegraphics[height=0.2032in,width=0.1929in]
{right-positivecross.eps}}%
\end{array}%
+pa%
\begin{array}{c}
\raisebox{-0.0796in}
{\includegraphics[height=0.2032in,width=0.1929in]
{Kpositivecross.eps}} \\
\raisebox{-0.0796in}
{\includegraphics[height=0.2266in,width=0.1756in]
{no-right-vertical-resolvecorss.eps}}%
\end{array}%
=-pa^{-1}q%
\raisebox{-0.0796in}
{\includegraphics[height=0.2266in,width=0.1756in]
{no-right-vertical-resolvecorss.eps}}+pas^{-1}%
\raisebox{-0.0796in}
{\includegraphics[height=0.2266in,width=0.1756in]
{no-right-vertical-resolvecorss.eps}}=0$

\bigskip

Thus we proved

$\chi _{q;s,t}\left(
\begin{array}{c}
\raisebox{-0.0796in}
{\includegraphics[height=0.2491in,width=0.2335in]
{Kpositivecross.eps}} \\
\raisebox{-0.0796in}
{\includegraphics[height=0.2491in,width=0.2335in]
{Knegativecross.eps}}%
\end{array}%
\right) =\chi _{q;s,t}\left(
\raisebox{-0.0796in}
{\includegraphics[height=0.2681in,width=0.1842in]
{Kresolvecorss.eps}}\right) $

\bigskip

By a similar computation, we get

\bigskip

$\chi _{q;s,t}\left(
\begin{array}{c}
\raisebox{-0.0796in}
{\includegraphics[height=0.2491in,width=0.2335in]
{Knegativecross.eps}} \\
\raisebox{-0.0796in}
{\includegraphics[height=0.2491in,width=0.2335in]
{Kpositivecross.eps}}%
\end{array}%
\right) =\chi _{q;s,t}\left(
\raisebox{-0.0796in}
{\includegraphics[height=0.2681in,width=0.1842in]
{Kresolvecorss.eps}}\right) $

\bigskip

Then we check the skein relation as follows

\bigskip

$\chi _{q;s,t}\left(
\raisebox{-0.0796in}
{\includegraphics[height=0.2491in,width=0.2335in]
{Kpositivecross.eps}}\right) -\chi _{q;s,t}\left(
\raisebox{-0.0796in}
{\includegraphics[height=0.2491in,width=0.2335in]
{Knegativecross.eps}}\right) $

\bigskip

$=\left(
\begin{array}{ccccccccc}
\begin{array}{c}
\raisebox{-0.0796in}
{\includegraphics[height=0.2032in,width=0.1929in]
{positivecross.eps}}- \\
\raisebox{-0.0796in}
{\includegraphics[height=0.2032in,width=0.1929in]
{negativecross.eps} }%
\end{array}
& 0 & 0 & 0 & 0 & 0 & 0 & 0 & 0 \\
0 &
\begin{array}{c}
p%
\raisebox{-0.0796in}
{\includegraphics[height=0.2205in,width=0.1678in]
{lower-upper-resolvecorss.eps}}- \\
ps^{-1}t^{-1}%
\raisebox{-0.0796in}
{\includegraphics[height=0.2266in,width=0.1652in]
{right-left-vertical-resolvecorss.eps}}%
\end{array}
&
\begin{array}{c}
\raisebox{-0.0796in}
{\includegraphics[height=0.2032in,width=0.1929in]
{right-negativecross.eps}}- \\
\raisebox{-0.0796in}
{\includegraphics[height=0.2032in,width=0.1929in]
{right-positivecross.eps}}%
\end{array}
& 0 & 0 & 0 & 0 & 0 & -pa^{-1}%
\raisebox{-0.0796in}
{\includegraphics[height=0.2266in,width=0.1652in]
{right-no-vertical-resolvecorss.eps}} \\
&  &  &  &  &  &  &  &  \\
0 &
\raisebox{-0.0796in}
{\includegraphics[height=0.2032in,width=0.1929in]
{left-negativecross.eps}}-%
\raisebox{-0.0796in}
{\includegraphics[height=0.2032in,width=0.1929in]
{left-positivecross.eps}} &
\begin{array}{c}
p%
\raisebox{-0.0796in}
{\includegraphics[height=0.2197in,width=0.1609in]
{upper-lower-resolvecorss.eps}}- \\
pst%
\raisebox{-0.0796in}
{\includegraphics[height=0.2266in,width=0.1781in]
{left-right-vertical-resolvecorss.eps}}%
\end{array}
& 0 & 0 & 0 & 0 & 0 & -ptaq^{-1}%
\raisebox{-0.0796in}
{\includegraphics[height=0.2266in,width=0.1678in]
{left-no-vertical-resolvecorss.eps}} \\
0 & 0 & 0 &
\begin{array}{c}
\raisebox{-0.0796in}
{\includegraphics[height=0.2032in,width=0.1929in]
{upper-positivecross.eps}}- \\
\raisebox{-0.0796in}
{\includegraphics[height=0.2032in,width=0.1929in]
{upper-negativecross.eps}}%
\end{array}
& 0 & 0 & 0 & 0 & 0 \\
0 & 0 & 0 & 0 & p%
\raisebox{-0.0796in}
{\includegraphics[height=0.2205in,width=0.1678in]
{lower-no-resolvecorss.eps}} & 0 &
\begin{array}{c}
\raisebox{-0.0796in}
{\includegraphics[height=0.2032in,width=0.1929in]
{rightlower-positivecross.eps} }- \\
\raisebox{-0.0796in}
{\includegraphics[height=0.2032in,width=0.1929in]
{rightlower-negativecross.eps}}%
\end{array}
& 0 & 0 \\
0 & 0 & 0 & 0 & 0 & p%
\raisebox{-0.0796in}
{\includegraphics[height=0.2205in,width=0.1531in]
{no-upper-resolvecorss.eps}} & 0 &
\begin{array}{c}
\raisebox{-0.0796in}
{\includegraphics[height=0.2032in,width=0.1929in]
{rightupper-positivecross.eps}}- \\
\raisebox{-0.0796in}
{\includegraphics[height=0.2032in,width=0.1929in]
{rightupper-negativecross.eps}}%
\end{array}
& 0 \\
0 & 0 & 0 & 0 &
\begin{array}{c}
\raisebox{-0.0796in}
{\includegraphics[height=0.2032in,width=0.1929in]
{leftlower-positivecross.eps}}- \\
\raisebox{-0.0796in}
{\includegraphics[height=0.2032in,width=0.1929in]
{leftlower-negativecross.eps}}%
\end{array}
& 0 & p%
\raisebox{-0.0796in}
{\includegraphics[height=0.2197in,width=0.1609in]
{no-lower-resolvecorss.eps}} & 0 & 0 \\
0 & 0 & 0 & 0 & 0 &
\begin{array}{c}
\raisebox{-0.0796in}
{\includegraphics[height=0.2032in,width=0.1929in]
{leftupper-positivecross.eps}}- \\
\raisebox{-0.0796in}
{\includegraphics[height=0.2032in,width=0.1929in]
{leftupper-negativecross.eps}}%
\end{array}
& 0 & p%
\raisebox{-0.0796in}
{\includegraphics[height=0.2205in,width=0.1531in]
{upper-no-resolvecorss.eps}} & 0 \\
0 & -pt^{-1}a^{-1}q%
\raisebox{-0.0796in}
{\includegraphics[height=0.2266in,width=0.1652in]
{no-left-vertical-resolvecorss.eps}} & -pa%
\raisebox{-0.0796in}
{\includegraphics[height=0.2266in,width=0.1756in]
{no-right-vertical-resolvecorss.eps}} & 0 & 0 & 0 & 0 & 0 &
\raisebox{-0.0796in}
{\includegraphics[height=0.2032in,width=0.1929in]
{Kpositivecross.eps}}-%
\raisebox{-0.0796in}
{\includegraphics[height=0.2032in,width=0.1929in]
{Knegativecross.eps}}%
\end{array}%
\right) $

\bigskip

$=\left(
\begin{array}{ccccccccc}
p%
\raisebox{-0.0796in}
{\includegraphics[height=0.2681in,width=0.2136in]
{resolvecorss.eps}} & 0 & 0 & 0 & 0 & 0 & 0 & 0 & 0 \\
0 & p%
\raisebox{-0.0796in}
{\includegraphics[height=0.2205in,width=0.1678in]
{lower-upper-resolvecorss.eps}}-ps^{-1}t^{-1}%
\raisebox{-0.0796in}
{\includegraphics[height=0.2266in,width=0.1652in]
{right-left-vertical-resolvecorss.eps}} & -p%
\raisebox{-0.0796in}
{\includegraphics[height=0.2655in,width=0.205in]
{right-right-vertical-resolvecorss.eps}} & 0 & 0 & 0 & 0 & 0 & -pa^{-1}%
\raisebox{-0.0796in}
{\includegraphics[height=0.2266in,width=0.1652in]
{right-no-vertical-resolvecorss.eps}} \\
&  &  &  &  &  &  &  &  \\
0 &
\raisebox{-0.0796in}
{\includegraphics[height=0.2032in,width=0.1929in]
{left-negativecross.eps}}-%
\raisebox{-0.0796in}
{\includegraphics[height=0.2491in,width=0.1929in]
{left-positivecross.eps}} & p%
\raisebox{-0.0796in}
{\includegraphics[height=0.2197in,width=0.1609in]
{upper-lower-resolvecorss.eps}}-pst%
\raisebox{-0.0796in}
{\includegraphics[height=0.2266in,width=0.1781in]
{left-right-vertical-resolvecorss.eps}} & 0 & 0 & 0 & 0 & 0 & -ptaq^{-1}%
\raisebox{-0.0796in}
{\includegraphics[height=0.2266in,width=0.1678in]
{left-no-vertical-resolvecorss.eps}} \\
0 & 0 & 0 & p%
\raisebox{-0.0796in}
{\includegraphics[height=0.2681in,width=0.1842in]
{upper-uper-resolvecorss.eps} } & 0 & 0 & 0 & 0 & 0 \\
0 & 0 & 0 & 0 & p%
\raisebox{-0.0796in}
{\includegraphics[height=0.2205in,width=0.1678in]
{lower-no-resolvecorss.eps}} & 0 & 0 & 0 & 0 \\
0 & 0 & 0 & 0 & 0 & p%
\raisebox{-0.0796in}
{\includegraphics[height=0.2205in,width=0.1531in]
{no-upper-resolvecorss.eps}} & 0 & 0 & 0 \\
0 & 0 & 0 & 0 & 0 & 0 & p%
\raisebox{-0.0796in}
{\includegraphics[height=0.2197in,width=0.1609in]
{no-lower-resolvecorss.eps}} & 0 & 0 \\
0 & 0 & 0 & 0 & 0 & 0 & 0 & p%
\raisebox{-0.0796in}
{\includegraphics[height=0.2205in,width=0.1531in]
{upper-no-resolvecorss.eps}} & 0 \\
0 & -pt^{-1}a^{-1}q%
\raisebox{-0.0796in}
{\includegraphics[height=0.2266in,width=0.1652in]
{no-left-vertical-resolvecorss.eps}} & -pa%
\raisebox{-0.0796in}
{\includegraphics[height=0.2266in,width=0.1756in]
{no-right-vertical-resolvecorss.eps}} & 0 & 0 & 0 & 0 & 0 & p(%
\raisebox{-0.0796in}
{\includegraphics[height=0.2681in,width=0.1842in]
{Kresolvecorss.eps}}-%
\raisebox{-0.0796in}
{\includegraphics[height=0.2672in,width=0.1929in]
{vertical-resolvecorss.eps}})%
\end{array}%
\right) $

\bigskip

$=(q-q^{-1})\left(
\begin{array}{ccccccccc}
\raisebox{-0.0796in}
{\includegraphics[height=0.2681in,width=0.2136in]
{resolvecorss.eps}} & 0 & 0 & 0 & 0 & 0 & 0 & 0 & 0 \\
0 &
\raisebox{-0.0796in}
{\includegraphics[height=0.2681in,width=0.2032in]
{lower-upper-resolvecorss.eps}} & 0 & 0 & 0 & 0 & 0 & 0 & 0 \\
0 & 0 &
\raisebox{-0.0796in}
{\includegraphics[height=0.2681in,width=0.1946in]
{upper-lower-resolvecorss.eps}} & 0 & 0 & 0 & 0 & 0 & 0 \\
0 & 0 & 0 &
\raisebox{-0.0796in}
{\includegraphics[height=0.2681in, width=0.1842in]
{upper-uper-resolvecorss.eps}} & 0 & 0 & 0 & 0 & 0 \\
0 & 0 & 0 & 0 &
\raisebox{-0.0796in}
{\includegraphics[height=0.2681in,width=0.2032in]
{lower-no-resolvecorss.eps}} & 0 & 0 & 0 & 0 \\
0 & 0 & 0 & 0 & 0 &
\raisebox{-0.0796in}
{\includegraphics[height=0.2681in,width=0.1842in]
{no-upper-resolvecorss.eps}} & 0 & 0 & 0 \\
0 & 0 & 0 & 0 & 0 & 0 &
\raisebox{-0.0796in}
{\includegraphics[height=0.2681in,width=0.1946in]
{no-lower-resolvecorss.eps}} & 0 & 0 \\
0 & 0 & 0 & 0 & 0 & 0 & 0 &
\raisebox{-0.0796in}
{\includegraphics[height=0.2681in,width=0.1842in]
{upper-no-resolvecorss.eps}} & 0 \\
0 & 0 & 0 & 0 & 0 & 0 & 0 & 0 &
\raisebox{-0.0796in}
{\includegraphics[height=0.2681in,width=0.1842in]
{Kresolvecorss.eps}}%
\end{array}%
\right) $

\bigskip

$-(q-q^{-1})\left(
\begin{array}{ccccccccc}
0 & 0 & 0 & 0 & 0 & 0 & 0 & 0 & 0 \\
0 & s^{-1}t^{-1}%
\raisebox{-0.0796in}
{\includegraphics[height=0.2672in,width=0.1929in]
{right-left-vertical-resolvecorss.eps}} &
\raisebox{-0.0796in}
{\includegraphics[height=0.2655in,width=0.205in]
{right-right-vertical-resolvecorss.eps}} & 0 & 0 & 0 & 0 & 0 & a^{-1}%
\raisebox{-0.0796in}
{\includegraphics[height=0.2672in,width=0.1929in]
{right-no-vertical-resolvecorss.eps}} \\
&  &  &  &  &  &  &  &  \\
0 &
\raisebox{-0.0796in}
{\includegraphics[height=0.2664in,width=0.1954in]
{left-left-vertical-resolvecorss.eps}} & st%
\raisebox{-0.0796in}
{\includegraphics[height=0.2664in,width=0.2076in]
{left-right-vertical-resolvecorss.eps}} & 0 & 0 & 0 & 0 & 0 & taq^{-1}%
\raisebox{-0.0796in}{\includegraphics[height=0.2664in,width=0.1954in]
{left-no-vertical-resolvecorss.eps}} \\
0 & 0 & 0 & 0 & 0 & 0 & 0 & 0 & 0 \\
0 & 0 & 0 & 0 & 0 & 0 & 0 & 0 & 0 \\
0 & 0 & 0 & 0 & 0 & 0 & 0 & 0 & 0 \\
0 & 0 & 0 & 0 & 0 & 0 & 0 & 0 & 0 \\
0 & 0 & 0 & 0 & 0 & 0 & 0 & 0 & 0 \\
0 & t^{-1}a^{-1}q%
\raisebox{-0.0796in}
{\includegraphics[height=0.2672in,width=0.1929in]
{no-left-vertical-resolvecorss.eps}} & a%
\raisebox{-0.0796in}
{\includegraphics[height=0.2655in,width=0.205in]
{no-right-vertical-resolvecorss.eps}} & 0 & 0 & 0 & 0 & 0 &
\raisebox{-0.0796in}
{\includegraphics[height=0.2672in,
width=0.1929in ] {vertical-resolvecorss.eps}}%
\end{array}%
\right) $

\bigskip

$=(q-q^{-1})\chi _{q;s,t}\left(
\raisebox{-0.0796in}
{\includegraphics[height=0.2681in,width=0.1842in]
{Kresolvecorss.eps}}-%
\raisebox{-0.0796in}
{\includegraphics[height=0.2672in,width=0.1929in]
{vertical-resolvecorss.eps}}\right) $

\bigskip

Framing moves can be checked similarly. For example, for one of the
nontrivial components of the image of the left move in (\ref{Kauffman Kink})
we have:

\bigskip

$%
\raisebox{-0.1003in}
{\includegraphics[height=0.3217in,width=0.3027in]
{positivecross.eps}}_{%
\raisebox{-0.1203in}
{\includegraphics[height=0.1513in,width=0.3442in]
{rightlower.eps}}}^{%
\raisebox{0.0803in}
{\includegraphics[height=0.1565in,width=0.2681in]
{lefttop.eps}}}$ $s^{-1}t^{-1}+(p$ $%
\raisebox{-0.1003in}
{\includegraphics[height=0.3494in,width=0.2612in]
{lower-upper-resolvecorss.eps}}-ps^{-1}t^{-1}$ $%
\raisebox{-0.1003in}
{\includegraphics[height=0.3598in,width=0.2569in]
{right-left-vertical-resolvecorss.eps}})_{%
\raisebox{-0.1203in}
{\includegraphics[height=0.1513in,width=0.3442in]
{leftlower.eps}}}^{%
\raisebox{0.0803in}
{\includegraphics[height=0.1565in,width=0.2681in]
{righttop.eps}}}$ $st+p%
\raisebox{-0.0796in}
{\includegraphics[height=0.2205in,width=0.1678in]
{lower-no-resolvecorss.eps}}_{%
\raisebox{-0.1203in}
{\includegraphics[height=0.1513in,width=0.3442in]
{lower.eps}}}^{%
\raisebox{0.0803in}
{\includegraphics[height=0.1565in,width=0.2681in]
{top.eps}}}$ $tt^{-1}$

\bigskip

$=%
\raisebox{-0.2006in}
{\includegraphics[height=0.5336in,width=0.3511in]
{positivekink.eps}}$ $s^{-1}t^{-1}+(q-q^{-1})$ $%
\raisebox{-0.2006in}
{\includegraphics[height=0.5007in,width=0.6737in]
{linecircle.eps}}$ $st-(q-q^{-1})$ $%
\raisebox{-0.2006in}
{\includegraphics[height=0.5587in,width=0.1894in]
{resolvekink.eps}}+(q-q^{-1})%
\raisebox{-0.2006in}
{\includegraphics[height=0.5007in,width=0.6737in]
{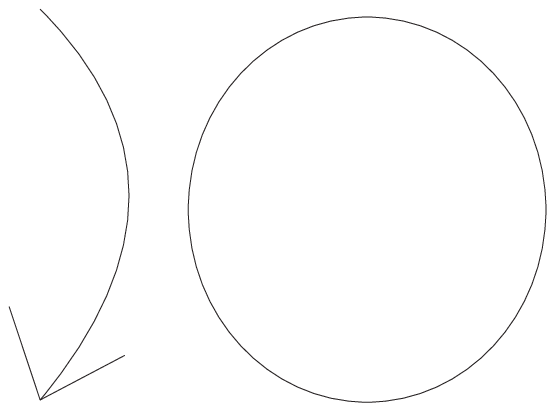}}$

\bigskip

$=%
\raisebox{-0.2006in}
{\includegraphics[height=0.5587in,width=0.1894in]
{resolvekink.eps}}$ $s^{-1}+%
\raisebox{-0.2006in}
{\includegraphics[height=0.5587in,width=0.1894in]
{resolvekink.eps}}$ $(q-q^{-1})\frac{t-t^{-1}}{q-q^{-1}}st-(q-q^{-1})$ $%
\raisebox{-0.2006in}
{\includegraphics[height=0.5587in,width=0.1894in]
{resolvekink.eps}}+%
\raisebox{-0.2006in}
{\includegraphics[height=0.5587in,width=0.1894in]
{resolvekink.eps}}(q-q^{-1})[1+\frac{s-s^{-1}}{q-q^{-1}}]$

\bigskip

$=%
\raisebox{-0.2006in}
{\includegraphics[height=0.5587in,width=0.1894in]
{resolvekink.eps}}$ $(s^{-1}+st^{2}-s-(q-q^{-1})+(q-q^{-1})+s-s^{-1})$

\bigskip

$=st^{2}$ $%
\raisebox{-0.2006in}
{\includegraphics[height=0.5587in,width=0.1894in]
{resolvekink.eps}}$

\bigskip

$p%
\raisebox{-0.0796in}
{\includegraphics[height=0.2205in,width=0.1531in]
{no-upper-resolvecorss.eps}}_{%
\raisebox{-0.1203in}
{\includegraphics[height=0.1513in,width=0.3442in]
{leftlower.eps}}}^{%
\raisebox{0.0803in}
{\includegraphics[height=0.1565in,width=0.2681in]
{righttop.eps}}}$ $st+%
\raisebox{-0.0796in}
{\includegraphics[height=0.2032in,width=0.1929in]
{Kpositivecross.eps}}_{%
\raisebox{-0.1203in}
{\includegraphics[height=0.1513in,width=0.3442in]
{lower.eps}}}^{%
\raisebox{0.0803in}
{\includegraphics[height=0.1565in,width=0.2681in]
{top.eps}}}$ $tt^{-1}$

\bigskip

$=(q-q^{-1})$ $%
\raisebox{-0.2006in}
{\includegraphics[height=0.5007in,width=0.6737in]
{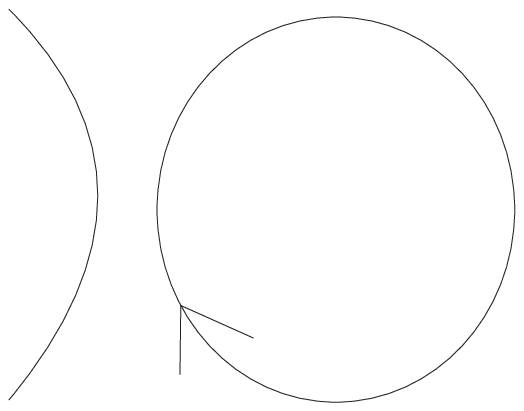}}$ $st+%
\raisebox{-0.2006in}
{\includegraphics[height=0.5336in,width=0.3503in]
{Kpositivekink.eps}}$

\bigskip

$=(q-q^{-1})\frac{t-t^{-1}}{q-q^{-1}}$ $st$ $%
\raisebox{-0.2006in}
{\includegraphics[height=0.5613in,width=0.147in]
{Kresolvekink.eps}}+s$ $%
\raisebox{-0.2006in}
{\includegraphics[height=0.5613in,width=0.147in]
{Kresolvekink.eps}}$

\bigskip

$=st^{2}$ $%
\raisebox{-0.2006in}
{\includegraphics[height=0.5613in,width=0.147in]
{Kresolvekink.eps}}$

\bigskip

and

\bigskip

$%
\raisebox{-0.0796in}
{\includegraphics[height=0.2032in,width=0.1929in]
{upper-positivecross.eps}}_{%
\raisebox{-0.1203in}
{\includegraphics[height=0.1513in,width=0.3442in]
{leftlower.eps}}}^{%
\raisebox{0.0803in}
{\includegraphics[height=0.1565in,width=0.2681in]
{righttop.eps}}}$ $st=st^{2}$ $%
\raisebox{-0.2214in}
{\includegraphics[height=0.6244in,width=0.2084in]
{resolvekinkreverse.eps}}$

\bigskip

Remaining relations can be checked similarly.
\end{proof}

When $s=\pm q^{N-1}$ and $t=q^{n-k}$, where $N=2k+1$ for $so_{2k+1}$ and $%
N=2k$ for $so_{2k}$ and $sp_{2k}$, the category $\mathcal{K}_{q;st^{2}}$ has
the quotient category which is naturally equivalent to the category of $%
U_{q}(g_{n})$-modules. Here $+$ is for $so$ and $-$ is for $sp$. For these
values of $s$ and $t$ the category $\mathcal{HK}_{q;s,t}$ is naturally
equivalent to the category of $U_{q}(g_{n})$-modules regarded as $%
U_{q}(g_{k})\otimes U_{q}(sl_{n-k})$-modules and functor $\chi _{q;s,t}$
becomes the restriction functor.

\end{document}